\documentclass[12pt]{alggeom}

\usepackage{amsmath}
\usepackage{xspace}
\usepackage{amsfonts}
\usepackage{amsthm}
\usepackage{amssymb}
\usepackage{times} 
\usepackage{graphicx}
\usepackage{hyperref}
\usepackage{enumerate}
\date{}

\numberwithin{equation}{section}
\newtheorem{thm}{Theorem}[section]
\newtheorem{prop}[thm]{Proposition}

\newtheorem{lemma}[thm]{Lemma}

\newtheorem{defn}[thm]{Definition}
\newtheorem{cor}[thm]{Corollary}
\newtheorem{remark}[thm]{Remark}

\makeatletter
\newcommand{\etale}{\'etal\@ifstar{\'e}{e\xspace}}
\makeatother

\begin{document}

\title{Azumaya algebras with involution and classical semisimple  group schemes}
\author{S. Srimathy}
\address{School of Mathematics\\
Tata Institute of Fundamental Research\\
Mumbai\\
India\\
email: srimathy@math.tifr.res.in}

\classification{14L15, 16H05}
\keywords{classical group schemes, Azumaya algebra with involution, classification of semisimple group schemes}
\begin{abstract}
Let $S$ be a non-empty scheme with 2 invertible. In this paper we present a functor  $F: AZ_*^{n'} \rightarrow GS_*^n$  where   $AZ_*^{n'}$ and $GS_*^n$ are  fibered categories over $Sch_S$  given respectively  by   degree-$n'$ Azumaya algebras with  an involution of type $*$ and rank-$n$  adjoint group schemes  of classical type $*$ with absolutely simple fibers. Here $n'$ is a function of $n$.   We show that this functor is an equivalence of fibered categories using \etale descent, thus  giving a classification of  adjoint (as well as simply connected) group schemes over  $S$, generalizing the well known case when the base scheme is the spectrum of a field. In particular, this implies that every adjoint group scheme of classical type  with absolutely simple fibers is isomorphic to the neutral component of the  automorphism group scheme of a unique (up to isomorphism) Azumaya algebra with involution.  We also show  interesting applications of this classification such as  specialization theorem for isomorphism classes of Azumaya algebra with involution over Henselian local rings, uniqueness of integral model for groups with good reduction over discrete valued fields and discuss its implications on the Grothendieck-Serre conjecture over certain domains. 
\end{abstract}
\maketitle

\section{Introduction}
\label{sec:intro}
It is well known that the category of rank-$n$ (with some $n$ excepted) absolutely simple adjoint (or simply connected) algebraic groups of a given classical type over any field $F$ ($char~F \neq 2$) is equivalent to the category of degree-$n'$ central simple algebras with involution of analogous type over $F$. Here $n'$ is a function of $n$. Moreover, the functor which gives this equivalence is obtained by taking a given central simple algebra  with involution over $F$ to the identity component of its automorphism group (or its simply connected cover for the  simply connected case). This result is originally due to Weil (\cite{weil}) and the proof of this equivalence can also be found in \cite[\S26, Chapter VI]{boi}. This gives  neat classification results for groups of classical types in terms of central simple algebras with involution which can be translated to the well understood language of  sesquilinear forms over division algebras. This kind of classification is very useful for studying many properties of algebraic groups and the projective homogeneous varieties associated to them. \\
\indent In this paper, we show a similar  classification  for adjoint (as well as simply connected)  group schemes  over an arbitrary scheme where is $2$ invertible. Since we are in the general case of arbitrary base scheme, we use the language of stacks and gerbes to  prove  that the fibered category of degree-$n'$  Azumaya algebras with  an involution of a given type is equivalent to the  fibered category of rank-$n$ classical adjoint group schemes   with absolutely simple fibers of  the corresponding type via \etale descent. As before, $n'$ is a function of $n$ with some $n$ excepted.  This implies that an adjoint group scheme of classical type (see Definition \ref{defn:grp}) with absolutely simple fibers is isomorphic to the neutral component of  automorphism group scheme of a unique (up to isomorphism) Azumaya algebra with involution. We also give a few applications of this classification such as a specialization theorem for Azumaya algebras with involution and uniqueness of integral model for group schemes with good reduction.  Another interesting corollary is that the Grothendieck-Serre conjecture on principal $G$-bundles holds whenever $G$ is an adjoint  group scheme   with absolutely simple fibers  over $R$ where is  $R$ is a regular local ring containing a field of characteristic $\neq 2$ or $R$ is a semilocal B\'{e}zout domain with $2$ invertible.

\section{Notations}
Through out this paper, $S$ denotes a non-empty scheme with $1/2 \in \mathcal{O}_S(S)$. The category of schemes over $S$ will be denoted by $Sch_S$. Given a scheme $X$ over $S$ and a point $s$ in $S$, $k(s), X_s, X_{\overline{s}}$ denote respectively the  residue field, fiber and geometric fiber at $s$. For a presheaf $\mathcal{F}$ over a scheme $X$,  $\mathcal{F}_x$ denotes its stalk at the point $x \in X$. The ring of $n \times n$ matrices is denoted by $\mathcal{M}_n$. The identity matrix of size $n$ is denoted by $I_n$. For a sheaf of algebra $\mathcal{A}$, $\mathcal{A}^{op}$ denotes the sheaf  of opposite algebra given by $U \rightarrow \mathcal{A}(U)^{op}$. The reference  \cite{sga3} is mentioned as SGA3.

\section{Group schemes over an arbitrary scheme}
\label{sec:gpscheme}
In this section we recall the necessary results from the literature on  group schemes over an arbitrary scheme. The main sources of reference are SGA3 and \cite{demazure_red}.\\

A group scheme $G$ over $S$ is called \emph{reductive} (resp. semi-simple, adjoint, simply connected) if $G$ is affine, smooth over $S$ and for every $s \in S$, the geometric fiber $G_{\overline{s}}$ is a connected reductive (resp. semi-simple, adjoint, simply-connected) group (\cite[Def. 2.1.2]{demazure_red} and  [SGA3, Exp. XXII, Def. 4.3.3]).  \\
 
 Let $G$ be an adjoint (or simply connected) group scheme $G$ over $S$. The \emph{type} (resp. \emph{rank}) of $G$ at $s \in S$ is the type (resp. rank) of $G_{\overline{s}}$ ([SGA3, Exp. XXII, Def. 2.7]).   The type and rank of $G$ are  locally constant functions over $S$ ([SGA3, Exp. XXII, Prop.2.8] and  [SGA3, Exp. XIX, Cor. 2.6]). 
 \begin{remark}
 Any adjoint (resp. simply connected) group scheme over $S$ is isomorphic to  the Weil restriction $R_{S'/S}(G')$ of an adjoint (resp. simply connected) group scheme $G'$ with absolutely simple fibers  over $S'$ where $S' \rightarrow S$ is a finite \etale cover. Moreover, the pair $(S', G')$ is uniquely determined upto a unique $S$-isomorphism (see  \cite[Proposition 6.4.4 and Remark 6.4.5]{bconrad_red}). Therefore,  classification of adjoint group schemes over $S$ reduces to classification of adjoint group schemes with absolutely simple fibers.
 \end{remark}

\begin{defn}\label{defn:grp}
Let $G$ be an adjoint group scheme over $S$ with absolutely simple fibers. We say that $G$ is  of type $A$ (resp. $B$, $C$, $D$) and rank $n$ if every fiber  $G_{\overline{s}}, s \in S$ is of type $A$ (resp. $B$, $C$, $D$) and rank $n$. In general, we say that $G$ is of classical type if it is of type $A$, $B$, $C$ or $D$. 
\end{defn}
 
\begin{prop}
\label{prop:grp_etale}
 Let $G$ and $H$ be adjoint group schemes over $S$  with absolutely simple fibers of a given type and rank. Then locally for the \etale topology on $S$,  $G$ and $H$ are isomorphic. In fact,  any reductive group scheme with  root datum   $\mathcal{R}$ is \etale locally isomorphic to a unique Chevalley group  over $spec~ \mathbb{Z}$ with root datum $\mathcal{R}$. 
\end{prop}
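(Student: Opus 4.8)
The plan is to deduce the statement from three standard pillars of the structure theory of reductive group schemes: \etale-local existence of maximal tori, isotriviality of tori, and the existence and isomorphism theorems for split reductive group schemes (SGA3, Exp.~XIV and Exp.~XXII--XXV; see also \cite{demazure_red} and \cite{bconrad_red}). Nothing deep needs to be proved from scratch; the work is in organizing the reduction.

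First I would reduce to the case of connected $S$ with $G$ and $H$ of constant type and rank. As recalled above, the type and rank of an adjoint group scheme are locally constant on $S$, so, working in a suitable open neighbourhood of a given point, we may assume $G$ and $H$ both have constant type $t$ and constant rank $r$. Since $G$ and $H$ are adjoint, the root datum attached to either of them (once a split maximal torus is chosen) is the adjoint root datum $\mathcal{R}=\mathcal{R}(t,r)$: its character lattice is the root lattice and its cocharacter lattice the coweight lattice of the root system $\Phi$ determined by $(t,r)$, so $\mathcal{R}$ is determined up to isomorphism by $(t,r)$ alone — there is exactly one adjoint root datum for each (irreducible, reduced) root system. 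It therefore suffices to show that $G$ becomes, \etale locally on $S$, isomorphic to a fixed $\mathbb{Z}$-group depending only on $\mathcal{R}$.

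Next I would invoke the local structure theory: every reductive group scheme over $S$ admits a maximal torus \etale locally on $S$, and every torus is split by a finite \etale cover. Passing to such a cover, $G$ becomes split with split maximal torus, hence acquires a root datum, which by the previous paragraph must be $\mathcal{R}$. By the Existence Theorem (SGA3, Exp.~XXV) there is a split reductive group scheme $G_{\mathcal{R}}$ over $\operatorname{Spec}\mathbb{Z}$ with root datum $\mathcal{R}$ — the Chevalley group — and by the Isomorphism Theorem (SGA3, Exp.~XXIII, Th.~4.1) over our connected base any split reductive group scheme with root datum $\mathcal{R}$ is isomorphic to $G_{\mathcal{R}}\times_{\mathbb{Z}}S$. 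Applying this to both $G$ and $H$ shows that \etale locally on $S$ one has $G\cong G_{\mathcal{R}}\times_{\mathbb{Z}}S\cong H$, which is the first assertion; the uniqueness of the Chevalley group attached to $\mathcal{R}$ is precisely the combined content of the Existence and Isomorphism Theorems just cited.

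The main obstacle is really only the bookkeeping: using local constancy of type and rank to reduce to a connected base, and checking carefully that the hypotheses ``adjoint, fixed type, fixed rank'' pin down one and the same root datum $\mathcal{R}$ for $G$ and for $H$, so that a single Chevalley group serves as the common \etale-local model. I would also note that here we only need the \emph{existence} of an isomorphism, not a canonical, pinning-compatible one; this is why connectedness of the base is enough and no rigidification (choice of \'epinglage) is required.
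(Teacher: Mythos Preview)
Your argument is correct and is precisely the standard route to this result; the paper itself gives no independent proof but simply refers to \cite[\S5, Cor.~5.1.4.I and Prop.~5.1.6]{demazure_red}, whose content (\etale-local splitting via tori plus the Existence and Isomorphism Theorems of SGA3) you have accurately unpacked.
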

\begin{proof}
See \cite[\S5, Cor. 5.1.4.I and Prop. 5.1.6]{demazure_red}.
\end{proof}

 Let $G$ be any group scheme of locally finite type over $S$. In [SGA3, Exp. VI\textsubscript{B}, Def 3.1], the notion of \emph{the neutral component of $G$} denoted by $G^0$ is defined by the  functor 
\begin{align*}
    T \rightarrow G^0(T) = \{u \in G(T)| \forall s \in S ,u_s(T_s) \subset (G_s)^0\}
\end{align*}
We refer the reader to [SGA3, \S3, Exp. VI\textsubscript{B}] for more details. It can be  shown that  the neutral component of $G$ is stable under base change. We restate here below.
\begin{prop} ([SGA3, Exp. VI\textsubscript{B}, Prop. 3.3]).
\label{prop:basechange}
Let $G$ be group scheme over $S$. Then for any scheme $S' \rightarrow S$, we have 
\begin{align}
    (G \times_S S')^0 = G^0 \times_S S'
\end{align}
i.e., the functor $G \rightarrow G^0$ commutes with base change.
\end{prop}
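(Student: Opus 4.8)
The plan is to verify the identity directly at the level of functors, exploiting that $G^0$ is defined functorially. Write $G' = G \times_S S'$. Both $(G')^0$ and $G^0 \times_S S'$ are subfunctors of $G'$ on $Sch_{S'}$, so I would show that for every $S'$-scheme $T$ they carve out the same subset of $G'(T)$. Under the canonical bijections $G'(T) = G(T)$ and $(G^0 \times_S S')(T) = G^0(T)$ — valid because $T$ is an $S'$-scheme — a morphism $u \colon T \to G$ lies in $(G^0 \times_S S')(T)$ exactly when $u_s(T_s) \subseteq (G_s)^0$ for every $s \in S$, and it lies in $(G')^0(T)$ exactly when $u'_{s'}(T_{s'}) \subseteq (G'_{s'})^0$ for every $s' \in S'$, where $u'$ denotes $u$ regarded as an $S'$-morphism to $G'$. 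So the whole statement reduces to comparing these two fiberwise conditions.

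The key input is one classical fact about group schemes over a field: if $K'/K$ is a field extension and $H$ is a group scheme locally of finite type over $K$, then forming the neutral component commutes with the extension, $(H_{K'})^0 = (H^0)_{K'}$. This holds because $H^0$ is geometrically connected — it is connected and contains the $K$-rational point $e$ (SGA3, Exp. VI$_{\mathrm A}$, 2.4) — so $(H^0)_{K'}$ is an open and closed connected subgroup of $H_{K'}$ through the identity, hence equals $(H_{K'})^0$. I would then apply this at each point. Given $s' \in S'$ lying over $s \in S$, the induced local homomorphism $\mathcal{O}_{S,s} \to \mathcal{O}_{S',s'}$ yields a field extension $k(s) \hookrightarrow k(s')$, and base change along $S' \to S$ gives $G'_{s'} = G_s \times_{k(s)} k(s')$; consequently $(G'_{s'})^0 = (G_s)^0 \times_{k(s)} k(s')$ is precisely the preimage of $(G_s)^0$ under the projection $\pi \colon G'_{s'} \to G_s$, and the fiber map $u'_{s'}$ sits over $u_s$ via $\pi$ and the natural morphism $T_{s'} \to T_s$.

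With this in hand, the equivalence of the two fiberwise conditions is a short point-set chase. If $u \in G^0(T)$ and $s' \in S'$ lies over $s \in S$, then for any $t \in T_{s'}$ the point $u'_{s'}(t)$ maps under $\pi$ to $u_s(t) \in (G_s)^0$, hence lies in $\pi^{-1}\bigl((G_s)^0\bigr) = (G'_{s'})^0$; thus $u \in (G')^0(T)$. Conversely, if $u \in (G')^0(T)$ and $s \in S$ is arbitrary, then for any $t \in T_s$, taking $s'$ to be the image of $t$ in $S'$ gives $u'_{s'}(t) \in (G'_{s'})^0$, whence $u_s(t) = \pi\bigl(u'_{s'}(t)\bigr) \in (G_s)^0$; thus $u \in G^0(T)$. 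This gives $(G')^0(T) = (G^0 \times_S S')(T)$ for all $T$, i.e. the asserted equality of subfunctors. The one genuine obstacle is the geometric connectedness of the neutral component over a field; everything else is bookkeeping with fibers and residue fields. Finally, when $G^0$ is representable — which holds whenever $G$ is smooth over $S$, in particular for all the reductive group schemes occurring in this paper — the functorial identity immediately upgrades to an isomorphism of $S'$-schemes.
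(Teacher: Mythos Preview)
The paper does not supply its own proof of this proposition; it simply records the statement and cites SGA3, Exp.~VI\textsubscript{B}, Prop.~3.3. Your argument is correct and is essentially the proof given in SGA3: a direct verification at the level of functors whose only substantive input is that the neutral component of a locally finite-type group scheme over a field is geometrically connected (SGA3, Exp.~VI\textsubscript{A}, 2.4), so that forming $(-)^0$ on fibers commutes with residue-field extensions.
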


We will be using the following result about $G^0$.
\begin{prop} (\cite[Prop 3.1.3]{bconrad_red})
\label{prop:conrad_neutral}
Let $G$ be a smooth separated group scheme of finite presentation such that $G_{\overline{s}}^0$ is reductive for all $s \in S$. Then $G^0$ is a reductive  group scheme over $S$ that is open and closed in $G$.
\end{prop}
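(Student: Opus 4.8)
\noindent\emph{Proof plan.} The plan is to establish, in order, that $G^0$ is open in $G$, that it is closed in $G$, and that it is affine over $S$; the remaining requirements in the definition of a reductive $S$-group scheme then follow formally. The geometric fibers are handled at once by Proposition~\ref{prop:basechange}: base changing the formation of $G^0$ along $\operatorname{Spec}\overline{k(s)}\to S$ gives $(G^0)_{\overline s}=(G_{\overline s})^0=G_{\overline s}^0$, which is connected reductive by hypothesis. Granting that $G^0$ is open and closed in $G$, one gets for free that $G^0\to S$ is smooth (being open in the smooth $G$), separated (being open in the separated $G$), and of finite presentation (open in $G$ gives locally of finite presentation, closed in $G$ gives quasi-compactness over $S$).

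For openness I would invoke the representability theorem for neutral components of smooth group schemes: since $G$ is smooth over $S$, the functor $T\mapsto G^0(T)$ is represented by an open subscheme of $G$, smooth over $S$ with connected fibers (SGA3, Exp.~VI\textsubscript{B}, Thm.~3.10). For closedness I would pass to the component group. Because $G$ is of finite presentation, every fiber $G_s$ has only finitely many connected components, so the fppf quotient sheaf $Q:=G/G^0$ is represented by a separated, quasi-finite, \etale $S$-scheme (SGA3, Exp.~VI\textsubscript{B}, \S3--4). The unit section $\varepsilon\colon S\to Q$ is a closed immersion, as $Q/S$ is separated, and it is \etale, being a section of the \etale morphism $Q\to S$; hence it is an open-and-closed immersion, and pulling back along $G\to Q$ shows that $G^0=(G\to Q)^{-1}\bigl(\varepsilon(S)\bigr)$ is open and closed in $G$.

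The crux is the affineness of $G^0\to S$. This does not follow from the affineness of the individual fibers alone — there exist smooth, separated, finitely presented group schemes with affine fibers that fail to be affine — so the reductivity of the fibers must genuinely be used. I would reduce to a small base: affineness is local on $S$ and satisfies fpqc descent, so one may assume $S=\operatorname{Spec} R$; a standard spreading-out argument, using that the reductive-fiber condition also spreads out, then brings one to $R$ Noetherian and further to $R$ complete Noetherian local. Over such an $R$ one lifts a maximal torus of the special fiber of $G^0$ (smoothness of the scheme of maximal tori, SGA3, Exp.~XII) and uses the structure theory of reductive groups to realize $G^0$ as a closed subgroup scheme of some $\mathrm{GL}_{n,R}$, hence affine. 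I expect this last step to be the main obstacle; everything else is formal once the two representability facts from SGA3 are in hand. I note that in all applications of this proposition in the present paper $G$ is the automorphism group scheme of an Azumaya algebra with involution, hence is already $S$-affine as a closed subgroup scheme of a suitable $\mathrm{GL}_1$; there the affineness of $G^0$ is automatic and only its openness and closedness in $G$ need attention.
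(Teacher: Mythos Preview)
The paper does not give its own proof of this proposition: it is stated with a citation to \cite[Prop.~3.1.3]{bconrad_red} and used as a black box, so there is no argument in the paper to compare yours against. What you have written is, in outline, the standard proof of the cited result: openness of $G^0$ via SGA3 Exp.~VI\textsubscript{B}, Thm.~3.10, closedness by pulling back the identity section of the \etale component group, and affineness handled by a limit reduction to a complete local Noetherian base followed by the existence of a maximal torus and an embedding into $\mathrm{GL}_n$. Your identification of affineness as the nontrivial step is correct, and your closing remark---that in every use of this proposition in the present paper the ambient $G$ is already $S$-affine (Theorem~\ref{thm:autoazu}), so that only the open-and-closed assertion is actually needed---is exactly right. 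For the purposes of this paper your plan is more than sufficient; if you want to match the cited source precisely you should consult Conrad's argument for the affineness step, which is where the details become delicate.
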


For a group scheme $G$ over $S$, we  associate the  automorphism functor  $\underline{Aut}(G)$  on $Sch_S$ defined by
\begin{align*}
    \underline{Aut}(G): S' \mapsto Aut_{S'-grp}(G_{S'})
\end{align*}

\begin{prop}
\label{prop:autg}
Let $G$ a semisimple group scheme over $S$. Then the functor $\underline{Aut}(G)$ is represented by a smooth, affine scheme over $S$.
\end{prop}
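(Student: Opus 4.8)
This is part of SGA3, Exp.~XXIV, Thm.~1.3; the plan is to recover the assertions we need by \etale descent from the split case.

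First, $\underline{Aut}(G)$ is a sheaf for the fppf (hence \etale) topology on $S$: isomorphisms of affine $T$-group schemes glue, being a homomorphism is an fppf-local condition, and an fppf-locally invertible homomorphism is invertible. Moreover its formation commutes with base change, since for $S'\to S$ and $T\to S'$ one has $\underline{Aut}(G)(T)=\mathrm{Aut}_{T\text{-grp}}(G_T)=\mathrm{Aut}_{T\text{-grp}}((G_{S'})_T)=\underline{Aut}(G_{S'})(T)$, whence $\underline{Aut}(G)\times_S S'=\underline{Aut}(G_{S'})$. Now representability by an affine scheme descends along fppf covers (effective descent for affine morphisms), and the properties ``affine'' and ``smooth'' are fppf-local on the base; so it suffices to prove the proposition after an \etale base change $S'\to S$. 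By Proposition~\ref{prop:grp_etale} we may therefore assume $G=G_0\times_{\mathrm{Spec}\,\mathbb{Z}}S$, with $G_0$ a split (Chevalley) semisimple group.

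Fix a pinning $\mathcal{E}=(T_0,B_0,\{X_\alpha\}_{\alpha\in\Delta})$ of $G_0$ and let $\underline{Aut}(G,\mathcal{E})\subseteq\underline{Aut}(G)$ be the subfunctor of automorphisms preserving $\mathcal{E}$. Two inputs from the structure theory of split reductive group schemes are needed (SGA3, Exp.~XXII--XXIV; \cite[\S\S 6--7]{bconrad_red}). First, the rigidity part of the isogeny theorem: a pinned automorphism is determined by the automorphism it induces on the based root datum $(R_0,\Delta_0)$, and conversely every automorphism of $(R_0,\Delta_0)$ lifts uniquely to one preserving $\mathcal{E}$; hence $\underline{Aut}(G,\mathcal{E})$ is represented by the constant $S$-group scheme $\Gamma_S$ attached to the finite group $\Gamma=\mathrm{Aut}(R_0,\Delta_0)$. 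Second, conjugation gives a monomorphism from the adjoint group $G^{\mathrm{ad}}:=G_0^{\mathrm{ad}}\times_{\mathbb{Z}}S$ (the sheaf of inner automorphisms) into $\underline{Aut}(G)$, and $\underline{Aut}(G)=G^{\mathrm{ad}}\rtimes\underline{Aut}(G,\mathcal{E})$: working fppf-locally, an automorphism can be composed successively with inner automorphisms so as to preserve $T_0$ (conjugacy of maximal tori), then $B_0$ (simple transitivity of $N_{G_0}(T_0)/T_0$ on the Borels containing $T_0$), then each $X_\alpha$ (rescaling by a torus element), while an inner automorphism fixing $\mathcal{E}$ is trivial modulo the center.

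Granting these, $\underline{Aut}(G)\cong G^{\mathrm{ad}}\rtimes\Gamma_S$, whose underlying $S$-scheme is the finite disjoint union $\coprod_{\gamma\in\Gamma}G^{\mathrm{ad}}$. Since $G^{\mathrm{ad}}$ is a semisimple group scheme it is affine and smooth over $S$, and a finite disjoint union of affine (resp.\ smooth) $S$-schemes is affine (resp.\ smooth); descending back to the original base via the first two paragraphs completes the argument. The main obstacle is the pinned description of $\underline{Aut}(G_0)$ used above, which packages the isogeny/rigidity theorem together with the existence and good behaviour of the adjoint quotient and of the center for split semisimple groups; I would invoke these from SGA3 and \cite{bconrad_red} rather than reprove them, the rest being formal descent.
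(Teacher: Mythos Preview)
Your proposal is correct and follows the same route as the paper, which simply cites SGA3, Exp.~XXIV, Thm.~1.3(i) and Cor.~1.6; you have unpacked that reference by reducing via \etale descent to the split case and invoking the semidirect product description $\underline{Aut}(G)\cong G^{\mathrm{ad}}\rtimes\Gamma_S$ from the pinning. One small remark: when you appeal to Proposition~\ref{prop:grp_etale}, it is the second sentence (valid for any reductive group scheme with given root datum) that you need, and implicitly you should first pass to a Zariski-open piece of $S$ on which the type of $G$ is constant before splitting \etale-locally; this is harmless but worth making explicit for a general semisimple $G$.
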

\begin{proof}
See [SGA 3, Exp. 24, Theorem 1.3(i) and Corollary 1.6]. 
\end{proof}

\begin{remark}
\label{rmk:representable}
A representable functor  on $Sch_S$ is a sheaf for the fpqc topology (\cite[Theorem 2.55]{vistoli_descent}). Therefore if $G$ is semisimple, by Proposition \ref{prop:autg}, $\underline{Aut}(G)$ is a sheaf for the fpqc topology and hence for the \etale topology on $Sch_S$.
\end{remark}

\section{Azumaya algebras with involution over an arbitrary scheme}
 Recall that an \emph{Azumaya algebra} $\mathcal{A}$ over $S$ is an $\mathcal{O}_S$-algebra that is locally free and finite type as an $\mathcal{O}_S$-module such that the canonical homomorphism 
 \begin{align*}
     \mathcal{A} \otimes \mathcal{A}^{op} &\rightarrow End_{\mathcal{O}_S-mod}(\mathcal{A}) \\
     a \otimes b &\mapsto (x \mapsto a \cdot x \cdot b)
 \end{align*}
is an isomorphism. This implies that there is an \etale covering $\{U_i \rightarrow S \}$ such that $\mathcal{A} \otimes_{\mathcal{O}_S}\mathcal{O}_{U_i}  \simeq \mathcal{M}_{n_i}(\mathcal{O}_{U_i})$ for some $n_i$. If   $n_i = n$ for all $i$ (this happens for example when $S$ is connected), we call $\mathcal{A}$ an Azumaya algebra  of \emph{degree} $n$ over $S$. Also recall that for an Azumaya algebra $\mathcal{A}$ over $S$, $\mathcal{A}_s \otimes k(s)$ is a central simple algebra over $k(s)$ for every $s \in S$ (see \cite[Chapter IV, \S2, Prop. 2.1]{milne_etale}).\\

Involutions on central simple algebras are well studied in the literature (\cite{boi}). In a similar fashion one can also define involution on  Azumaya algebras over the scheme $S$.  This is discussed in detail in \cite{parimala_involution} and \cite{parimala_brauer} which we briefly recall now. A classification of involutions on Azumaya algebras  in a more general setting  can be found  in  \cite[\S 5]{azumaya_classification}.

An \emph{involution of first kind} $\sigma$ on $\mathcal{A}$ is an isomorphism of $\mathcal{O}_S$-algebras
\begin{align*}
    \sigma: \mathcal{A} \rightarrow \mathcal{A}^{op}
\end{align*}
 such that $\sigma^{op}\circ \sigma$ is the identity. The involution $\sigma$ on $A$ is said to be of \emph{orthogonal type} (resp. \emph{symplectic type}) if \etale locally on $S$, it corresponds to a non-degenerate  symmetric (resp. skew-symmetric) bilinear form on a locally free $\mathcal{O}_S$-module with values in a line bundle over $S$ (see   \cite[\S1.1]{parimala_brauer}).\\
 
 Now let $\pi: T \rightarrow S$ be a an \etale covering of degree-$2$. Let $\mathcal{A}$ be an Azumaya algebra over $T$. An involution $\sigma$ of \emph{second kind} (a.k.a \emph{unitary type}) on $\mathcal{A}$ is an anti-automorphism  on $\mathcal{A}$ of order $2$ which on $\mathcal{O}_T$, restricts to the non-trivial element of the  Galois group of the covering.  Locally for the \etale topology on $S$ unitary involutions correspond to hermitian forms on locally free $\mathcal{O}_T$-modules (\cite[\S1.2]{parimala_brauer}).\\
 
 \noindent  The data of an Azumaya algebra $\mathcal{A}$ with a involution $\sigma$ over $S$ is denoted by $(\mathcal{A}, \sigma)$. A homomorphism between Azumaya algebras with involution is a homomorphism between the algebras which respects the involution structure.  \\
 
\begin{defn}
 In the case above where $\mathcal{A}$ is an Azumaya algebra over a quadratic \etale extension $T$ of $S$ and $\sigma$ is unitary, we will make a slight abuse of notation and call $(\mathcal{A},\sigma)$ an \emph{Azumaya algebra with unitary involution over $S$}  even though the center of $\mathcal{A}$ is not $\mathcal{O}_S$. This agrees with the corresponding notion of central simple algebras with unitary involution defined in  \cite[Chapter I, \S2.B]{boi}.  
\end{defn}

\begin{remark}
 If $(A, \sigma)$ is a degree-$n$ Azumaya algebra with involution over $S$ where $\sigma$ is of a given type, then for any $s \in S$, $(A_s \otimes k(s), \sigma_s \otimes k(s))$ is a  degree-$n$ central simple algebra with involution of the same type over $k(s)$ 
\end{remark}

 We now give an \etale local description of Azumaya algebras with involution.
 
 \begin{prop}
 \label{prop:local_azumaya}
 Let $(A, \sigma)$ be a degree-$n$ Azumaya algebra with involution over  $S$. Then locally for the \etale topology on $S$,  we have
 \begin{enumerate}[{(1)}]
     \item  $(\mathcal{A}, \sigma) \simeq (\mathcal{M}_n, tr)$ when $\sigma$ is of orthogonal type  where $tr: A \rightarrow A^{tr}$ is the transpose involution.
     \item $(\mathcal{A}, \sigma) \simeq (\mathcal{M}_{2m}, sp)$ when $\sigma$ is of symplectic type. Here  $sp$ is the involution on $\mathcal{M}_{2m}$ given by $A \rightarrow J_m A^{tr} J_m^{-1}$ where $J_m = \begin{bmatrix} 0& I_m \\ -I_m & 0 \end{bmatrix}$ is the standard matrix associated to an alternating form.
     \item $(\mathcal{A}, \sigma) \simeq (\mathcal{M}_n \times \mathcal{M}_n^{op}, \epsilon)$ when $\sigma$ is of unitary type where $\epsilon : (A,B^{op}) \rightarrow (B,A^{op})$ is the exchange involution.
 \end{enumerate}
 \end{prop}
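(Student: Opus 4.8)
The plan is to reduce the statement to the known classification of central simple algebras with involution over a field together with a descent argument, exploiting the fact that an Azumaya algebra is by definition split by an \etale covering. First I would recall that if $(\mathcal{A},\sigma)$ is a degree $n$ Azumaya algebra with involution over $S$, then there is an \etale covering $\{U_i \to S\}$ on which $\mathcal{A}\otimes_{\mathcal{O}_S}\mathcal{O}_{U_i}\simeq \mathcal{M}_n(\mathcal{O}_{U_i})$ (in the orthogonal and symplectic cases) or, in the unitary case, on which the quadratic \etale extension $T\to S$ splits, so that $\mathcal{A}\otimes\mathcal{O}_{U_i}\simeq \mathcal{M}_n(\mathcal{O}_{U_i})\times \mathcal{M}_n(\mathcal{O}_{U_i})$. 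Thus, after replacing $S$ by such a $U_i$ (which is allowed since the statement is local for the \etale topology), I may assume $\mathcal{A}$ is split, and I only need to bring the involution $\sigma$ into the standard form.

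The heart of the argument is then the classification of involutions on a split Azumaya algebra. Working \etale locally, I would argue as follows. For the orthogonal and symplectic cases: after splitting, $\sigma$ is an $\mathcal{O}_S$-algebra anti-automorphism of $\mathcal{M}_n(\mathcal{O}_S)$, hence by the (relative) Skolem–Noether theorem it is of the form $X\mapsto gX^{tr}g^{-1}$ for some $g\in GL_n(\mathcal{O}_S)$ determined up to a unit scalar, and the condition $\sigma^{op}\circ\sigma=\mathrm{id}$ forces $g^{tr}=\lambda g$ with $\lambda=\pm1$; orthogonal type (\etale locally a symmetric bilinear form) corresponds to $\lambda=1$ and symplectic type to $\lambda=-1$. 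It then remains to show that, \etale locally on $S$, the nondegenerate bilinear form associated to $g$ can be diagonalized to $I_n$ in the symmetric case — here one uses that $2$ is invertible and that units are \etale-locally squares, so any nondegenerate symmetric form is \etale locally isometric to the standard one — and to the standard alternating form $J_m$ in the skew-symmetric case, which holds Zariski-locally (and in particular \etale locally) since symplectic bases always exist over a local ring. Conjugating by the corresponding change-of-basis matrix transports $\sigma$ to $tr$ (resp. $sp$). For the unitary case: after splitting $T$, we have $\mathcal{A}\simeq B\times B^{op}$ with $B=\mathcal{M}_n(\mathcal{O}_S)$, the involution $\sigma$ swaps the two factors, and composing with an automorphism of $B\times B^{op}$ one identifies $\sigma$ with the exchange involution $\epsilon$; the Skolem–Noether-type normalization again lets one absorb the "twist" on the $B$-factor into an algebra automorphism.

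I would organize the write-up by treating the three cases in turn, citing Proposition~\ref{prop:local_azumaya}'s hypothesis that $\sigma$ is \emph{defined} (\etale locally) by a bilinear or hermitian form, which already does part of the work: the content of orthogonal/symplectic type is precisely that such a form exists \etale locally, so one only needs the normalization of that form. For completeness I would state the relative Skolem–Noether statement I use (automorphisms of $\mathcal{M}_n$ over a commutative ring are inner, anti-automorphisms are $\mathrm{transpose}$ composed with an inner one), which is standard, and the two linear-algebra normalization lemmas over a local ring with $2$ a unit.

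The main obstacle I anticipate is the \etale-local normalization of the nondegenerate symmetric bilinear form in the orthogonal case. Over a field with $2$ invertible every nondegenerate symmetric form diagonalizes, but the diagonal entries are units that need not be squares; passing to an \etale cover to extract square roots of these units is exactly what forces the "\etale locally" (rather than "Zariski locally") conclusion, and one must be careful that finitely many square-root extractions can be done simultaneously over a single \etale cover. The symplectic and unitary cases are comparatively soft, since the standard alternating form and the exchange involution are rigid enough that only Skolem–Noether and the existence of symplectic bases over local rings are needed.
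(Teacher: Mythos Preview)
Your proposal is correct and, for parts (1) and (2), is exactly the argument that the paper's cited references (Parimala--Sridharan--Suresh and Knus) carry out: the paper itself simply defers to the literature there, and your sketch (Skolem--Noether to produce the Gram matrix $g$, then \'etale-local diagonalization of the symmetric form using square roots of units, resp.\ Zariski-local existence of a symplectic basis) is precisely that standard proof.

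For part (3) your route differs slightly from the paper's. You propose to first pass to an \'etale cover of $S$ that simultaneously splits the quadratic extension $T/S$ and the algebra, land in $\mathcal{M}_n\times\mathcal{M}_n$, and then normalize the factor-swapping involution to $\epsilon$ via Skolem--Noether. The paper instead base-changes along $\pi:T\to S$ itself and uses the Galois-theoretic identification $(T\times_S T,\pi^*\tau)\simeq(T\coprod T,\mathrm{ex})$: this produces a canonical idempotent $e=(1,0)$ in the center of $p^*\mathcal{A}$ with $\sigma(e)=1-e$, and the map $a\mapsto(ea,(e\,p^*\sigma(a))^{op})$ is \emph{directly} an isomorphism $(p^*\mathcal{A},p^*\sigma)\simeq(B\times B^{op},\epsilon)$ with $B=e\,p^*\mathcal{A}$, so no separate Skolem--Noether normalization of the involution is needed; one then refines the cover once more to split $B$. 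The paper's argument is a bit cleaner because it reads off the exchange involution from the idempotent structure rather than having to argue that the ``twist'' on the $B$-factor can be absorbed; your approach works too, but you should be explicit that splitting $T$ alone only gives $p^*\mathcal{A}\simeq B_1\times B_2$ with $B_i$ Azumaya (not yet matrix algebras), and that the involution furnishes the identification $B_2\simeq B_1^{op}$ before you refine further.
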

 \begin{proof}
 All of the above are well known if the base scheme $S$ is the spectrum of a field (see \cite{boi}). For the general case, proofs of (1) and (2) can be easily derived and can  also be found in the literature. See for example, \cite[\S1.1]{parimala_brauer}  and \cite[Chapter III, \S8.5]{knus_quad}. We could not find the proof of (3) anywhere in the literature, so we give a proof here. \\
 \indent In this case,  $\mathcal{A}$ an Azumaya algebra with center  $\mathcal{O}_T$ where $T \rightarrow S$  is a degree-$2$ \etale cover of $S$ with the non-trivial element in its Galois group denoted by $\tau$. Consider the  degree-$2$ \etale cover $p:T \times_S T \rightarrow T$ obtained via the \etale base change $ \pi: T \rightarrow S$. It follows from Galois theory of schemes that $(T \times_S T, \pi^*\tau)  \simeq (T \coprod T, ex)$ (see the proof of Theorem 5.10 in \cite{galois_schemes}) where $ex: (x,y) \rightarrow (y,x)$. Now $p^*\mathcal{A}$  is an Azumaya algebra over $T \coprod T$.  Therefore the center $\mathcal{O}_T \times \mathcal{O}_T$ of $ p^*\mathcal{A}$ contains the idempotent $i= (1,0)$ where $ex(i) = 1-i$. Note that $B =i(p^*\mathcal{A})$  an Azumaya algebra over $T$  and we have an isomorphism of Azumaya algebras with unitary involution given by 
 \begin{align*}
     (p^*\mathcal{A}, p^*\sigma) &\xrightarrow{\simeq} (B \times B^{op}, \epsilon) \\
      a &\mapsto (i\cdot a, (i\cdot(p^*{\sigma}(a)))^{op})
 \end{align*}
 By taking a suitable \etale covering of $T$ which splits $B$, we obtain (3).
 \end{proof}
 
\begin{defn}
With notations as in Proposition \ref{prop:local_azumaya},  we say that  a degree-$n$ Azumaya algebra with unitary involution (resp. orthogonal, resp. symplectic)  over  $S$ is   split if it is isomorphic to  $(\mathcal{M}_n \times \mathcal{M}_n^{op}, \epsilon)$ (resp. $(\mathcal{M}_n, tr)$, resp. $(\mathcal{M}_n, sp)$).  
\end{defn}
 
 \section{The Group scheme of Automorphisms of Azumaya algebras with involution}
  Let $(\mathcal{A}, \sigma)$ be a degree-$n$ Azumaya algebra with involution  of any type  over $S$. Consider the functor 
  \begin{align*}
      \underline{Aut}(\mathcal{A}, \sigma) : &Sch_S \rightarrow Groups \\
      &(U \xrightarrow{i} S) \mapsto Aut_{\mathcal{O}_U-alg}(i^*\mathcal{A}, i^*\sigma)
  \end{align*}
 where $Aut_{\mathcal{O}_U-alg}(i^*\mathcal{A}, i^*\sigma)$ is the group of $\mathcal{O}_U$-algebra automorphisms of $i^*\mathcal{A}$ compatible with the involution $i^*\sigma$.\\
 
 \noindent \textbf{Notation:}   If a functor $\underline{F}$  on $Sch_S$ is representable, let us  denote the representing scheme by $F$.

\begin{thm}
\label{thm:autoazu}
 The functor $\underline{Aut}(\mathcal{A}, \sigma)$ is representable by a  smooth, affine group scheme over $S$.
\end{thm}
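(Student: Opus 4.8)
The strategy is to realize $\underline{Aut}(\mathcal{A},\sigma)$ as a closed subgroup scheme of a known smooth affine group scheme, and then to use the étale-local structure theory from Proposition~\ref{prop:local_azumaya} to upgrade to smoothness. First I would recall that $\underline{Aut}(\mathcal{A})$, the automorphism functor of the underlying Azumaya algebra (forgetting $\sigma$), is representable by a smooth affine $S$-group scheme: in the degree $n$ case this is $\mathbf{PGL}_n$ after an étale base change, and in general it is the inner form $\mathbf{PGL}_1(\mathcal{A})$ obtained by twisting, which is smooth and affine since smoothness and affineness are étale-local on the base. (In the unitary case $\mathcal{A}$ lives over the quadratic étale cover $T$, and $\underline{Aut}(\mathcal{A},\sigma)$ as a functor on $Sch_S$ is the Weil restriction-type object sitting inside $R_{T/S}\underline{Aut}(\mathcal{A})$ together with the semilinearity condition; this is again representable by a smooth affine $S$-scheme by the same reasoning applied to $\underline{\mathrm{Aut}}$ of the pair $(T,\mathcal{A})$.) An automorphism of $(\mathcal{A},\sigma)$ is precisely an automorphism $\varphi$ of $\mathcal{A}$ satisfying $\varphi\circ\sigma = \sigma^{op}\circ\varphi^{op}$, i.e. $\sigma\varphi\sigma^{-1}=\varphi$ — a closed condition cut out inside $\underline{Aut}(\mathcal{A})$. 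This exhibits $\underline{Aut}(\mathcal{A},\sigma)$ as a closed subfunctor of a representable functor that is moreover a sheaf for the fpqc (hence étale) topology, so $\underline{Aut}(\mathcal{A},\sigma)$ is representable by a closed, hence affine, $S$-group scheme $\mathrm{Aut}(\mathcal{A},\sigma)$.

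It remains to prove smoothness, and here I would argue étale-locally on $S$. By Proposition~\ref{prop:local_azumaya}, after an étale cover $\{U_i\to S\}$ the pair $(\mathcal{A},\sigma)$ becomes one of the split models $(\mathcal{M}_n,tr)$, $(\mathcal{M}_{2m},sp)$, or $(\mathcal{M}_n\times\mathcal{M}_n^{op},\epsilon)$ over $\mathcal{O}_{U_i}$; since smoothness is étale-local on the base and $\underline{Aut}$ commutes with base change, it suffices to check that $\underline{Aut}$ of each split model is smooth over the base ring. For these split models one computes the automorphism group scheme explicitly: using Skolem–Noether (which holds for matrix algebras over any commutative ring, automorphisms being inner up to scalars), one identifies $\underline{Aut}(\mathcal{M}_n,tr)$ with $\mathbf{PGO}_n$, $\underline{Aut}(\mathcal{M}_{2m},sp)$ with $\mathbf{PGSp}_{2m}$, and $\underline{Aut}(\mathcal{M}_n\times\mathcal{M}_n^{op},\epsilon)$ with $\mathbf{PGL}_n$ (the exchange involution forces the two factors to be swapped consistently, and the automorphism is determined by conjugation on one factor). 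Each of these is a well-known smooth affine group scheme over $\mathbb{Z}[1/2]$ — this is where $2$ being invertible enters, ensuring that the orthogonal and symplectic similitude groups are smooth. Descending smoothness back down to $S$ completes the argument.

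The main obstacle I anticipate is the unitary case. One has to be careful about what "automorphism of $(\mathcal{A},\sigma)$" means as a functor on $Sch_S$ when $\mathcal{A}$ is an algebra over the quadratic étale cover $T$ rather than over $\mathcal{O}_S$: an automorphism should be an $\mathcal{O}_S$-algebra automorphism of $\mathcal{A}$ (a priori semilinear over $\mathcal{O}_T$, though for inner automorphisms it will in fact be $\mathcal{O}_T$-linear) commuting with $\sigma$. Setting up the representing object correctly — e.g. as a subfunctor of $R_{T/S}\underline{Aut}_{\mathcal{O}_T}(\mathcal{A})$, or equivalently handling the possibility that an automorphism interchanges the two factors in the split model $\mathcal{M}_n\times\mathcal{M}_n^{op}$ — and then verifying that the split-model computation really yields $\mathbf{PGL}_n$ (with trivial Galois action on the quadratic cover being absorbed correctly) requires genuine care. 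Once the functor is pinned down, however, the representability and smoothness arguments run exactly as in the first-kind case via Proposition~\ref{prop:local_azumaya}.
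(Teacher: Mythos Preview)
Your approach is essentially the paper's: reduce to the split models via \'etale (fpqc) descent and Proposition~\ref{prop:local_azumaya}, then identify the split automorphism schemes as classical group schemes known to be smooth affine over $\mathbb{Z}[1/2]$. The paper organizes things slightly differently---it descends affineness from the split case rather than first embedding globally into $\underline{Aut}(\mathcal{A})$---but the content is the same. One correction: in the split unitary case $\underline{Aut}(\mathcal{M}_n\times\mathcal{M}_n^{op},\epsilon)$ is not $\mathbf{PGL}_n$ but an extension of $\mathbb{Z}/2\mathbb{Z}$ (the factor swap, which you rightly flag as an obstacle) by $\mathbf{PGL}_n$; the paper deals with this via the short exact sequence
\[
0 \to \underline{Aut}_{\mathcal{O}_S\times\mathcal{O}_S}(\mathcal{M}_n\times\mathcal{M}_n^{op},\epsilon) \to \underline{Aut}(\mathcal{M}_n\times\mathcal{M}_n^{op},\epsilon) \to \underline{Aut}_{\mathcal{O}_S}(\mathcal{O}_S\times\mathcal{O}_S) \to 0
\]
and invokes SGA3, Exp.~VI\textsubscript{B}, Prop.~9.2 to deduce smoothness of the middle term from smoothness of the outer ones.
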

 \begin{proof}
 Let $S' \rightarrow S$ be an fpqc morphism. Recall that the category of affine $S$-schemes  is equivalent to the category of affine $S'$-schemes with descent data. To see this use  \cite[Theorem 4, Chapter 6]{neron} and  the fact that for any scheme $X$, the category of affine $X$-schemes is anti-equivalent to the category of quasi-coherent sheaves of $\mathcal{O}_X$-algebras (\cite[\href{https://stacks.math.columbia.edu/tag/01S5}{Tag 01S5}, Lemma 29.11.5]{stacks-project} or  see   \cite[Theorem 4.33]{vistoli_descent}).  Moreover, the properties smooth and affine are fpqc local over the base (see \cite[\href{https://stacks.math.columbia.edu/tag/02YJ}{Tag 02YJ}]{stacks-project} or \cite[Prop 2.7.1 and Prop 6.8.3]{ega4_2}).   Therefore  by Proposition \ref{prop:local_azumaya} it suffices to prove the theorem when $(\mathcal{A}, \sigma)$ is split. So assume that $(\mathcal{A}, \sigma)$ is split. Note that in this case the functor
 \begin{align*}
     \underline{Aut}(\mathcal{A}) : (U \xrightarrow{i} S) \mapsto Aut_{\mathcal{O}_U-alg}(i^*\mathcal{A})
 \end{align*}
 is representable by a closed  subscheme of the affine $\mathbb{Z}$-scheme $End_{\mathcal{O}_S -mod}(\mathcal{A}) \simeq \mathcal{M}_{r}$  where $r = dim_{\mathcal{O}_S}(\mathcal{A})$ (see  \cite[\S2, Chapter IV]{milne_etale}). Module homomorphisms of $\mathcal{A}$ that respect the involution can be expressed as vanishing of  polynomials and hence is representable by a closed subscheme of  $\mathcal{M}_{r^2}$. The intersection of these two subschemes represents  $\underline{Aut}(\mathcal{A}, \sigma)$  and hence is an affine scheme over $S$.  \\
 Now we prove  that this affine scheme  is smooth. For the unitary case, consider the functor 
 \begin{align*}
     \underline{Aut}_{\mathcal{O}_S \times \mathcal{O}_S} (\mathcal{M}_n \times \mathcal{M}_n^{op}, \epsilon) : (U \xrightarrow{i} S) &\mapsto Aut_{\mathcal{O}_U \times \mathcal{O}_U -alg}(\mathcal{M}_n(\mathcal{O}_U) \times \mathcal{M}_n(\mathcal{O}_U)^{op}, \epsilon) \\
     &\simeq Aut_{\mathcal{O}_U -alg}(\mathcal{M}_n(\mathcal{O}_U))
 \end{align*}
 Hence $\underline{Aut}_{\mathcal{O}_S \times \mathcal{O}_S} (\mathcal{M}_n \times \mathcal{M}_n^{op}, \epsilon)$ is representable by a smooth affine $\mathbb{Z}$-scheme  (see  \cite[Chapter IV, \S2]{milne_etale} for affineness and \cite[Chapter II, \S5, 2.7]{demazure_gabriel} for smoothness). 
The functor $\underline{Aut}_{\mathcal{O}_S} (\mathcal{O}_S \times \mathcal{O}_S)$ is representable by the finite group scheme $\mathbb{Z}/2\mathbb{Z}$.  Since  representable functors on $Sch_s$ are sheaves for the fpqc topology (\cite[Theorem 2.55]{vistoli_descent}), we have the following exact sequence of  sheaves over $S$
 \begin{align}
     1 \rightarrow \underline{Aut}_{\mathcal{O}_S \times \mathcal{O}_S} (\mathcal{M}_n \times \mathcal{M}_n^{op}, \epsilon) \rightarrow \underline{Aut}(\mathcal{M}_n \times \mathcal{M}_n^{op}, \epsilon) \rightarrow \underline{Aut}_{\mathcal{O}_S} (\mathcal{O}_S \times \mathcal{O}_S) \rightarrow 1
 \end{align}
 Smoothness of $Aut(\mathcal{M}_n \times \mathcal{M}_n^{op}, \epsilon)$ now follows from [SGA3, Exp. VI\textsubscript{B}, Prop. 9.2].\\
\indent For the other cases, note that $Aut(\mathcal{M}_{n}, sp) \simeq Sp_n/Z$ and $Aut(\mathcal{M}_n, tr) \simeq O_n/Z$ (as fpqc quotients) where $Sp_n$ , $O_n$ denote  respectively the  symplectic group, orthogonal group and $Z$ their respective centers. Required smoothness results for these group schemes  follow from  \cite[Chapter II, \S5, 2.7]{demazure_gabriel} and  [SGA3, Exp. VI\textsubscript{B}, Prop. 9.2].
 \end{proof}
 
\begin{remark}
By   \cite[Theorem 2.55]{vistoli_descent} and Theorem \ref{thm:autoazu},  $\underline{Aut}(\mathcal{A}, \sigma)$ is a sheaf   for the fpqc topology.
\end{remark}
 
\begin{defn} Let $Aut^0(\mathcal{A}, \sigma) := Aut(\mathcal{A}, \sigma)^0$ denote the neutral component of $Aut(\mathcal{A}, \sigma)$  defined in \S\ref{sec:gpscheme}. 
 \end{defn}
 
 \begin{thm}
 \label{thm:neutral}
 Let $(\mathcal{A}, \sigma)$  be a degree $n$  Azumaya algebra with involution  over $S$ where $n\neq 2,4$ whenever $\sigma$ is orthogonal.   Then   $Aut^0(\mathcal{A}, \sigma)$ is an adjoint group scheme  over $S$  with absolutely simple fibers. Moreover  $Aut^0(\mathcal{A}, \sigma)$ is of type $A$ if $\sigma$ is unitary, type $B$ if $\sigma$ is orthogonal and $n$ is odd, type $C$ if $\sigma$ is symplectic and type $D$ if $\sigma$ is orthogonal and $n$ is  even.
 \end{thm}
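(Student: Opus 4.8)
The plan is to compute everything on geometric fibers, invoke the classical field-theoretic description of the automorphism groups of central simple algebras with involution, and then transport the conclusion back to $S$ using Propositions \ref{prop:basechange} and \ref{prop:conrad_neutral}. Write $G = Aut(\mathcal{A},\sigma)$ and let $n$ be the degree of $(\mathcal{A},\sigma)$. By Theorem \ref{thm:autoazu}, $G$ is a smooth affine group scheme over $S$; being affine it is quasi-compact and separated, and being smooth it is locally of finite presentation, so $G$ is of finite presentation over $S$, and the hypotheses of Proposition \ref{prop:conrad_neutral} will be in force as soon as its geometric fibers are understood.

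First I would identify the geometric fibers of $G$. Forming the automorphism scheme commutes with base change, which is immediate from the defining functor $\underline{Aut}(\mathcal{A},\sigma)$, so for $s \in S$ one has $G_{\overline{s}} \simeq Aut(\mathcal{A}_{\overline{s}}, \sigma_{\overline{s}})$, where $(\mathcal{A}_{\overline{s}}, \sigma_{\overline{s}})$ is a central simple algebra with involution of the same type over the algebraically closed field $k(\overline{s})$ of characteristic $\neq 2$. Over such a field the algebra is split and the involution is isomorphic to one of the three standard models $(\mathcal{M}_n, tr)$, $(\mathcal{M}_{2m}, sp)$, $(\mathcal{M}_n \times \mathcal{M}_n^{op}, \epsilon)$ (this is the field case of Proposition \ref{prop:local_azumaya}). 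For these split data the classical computation (see \cite[\S 23, \S 26]{boi}, originally \cite{weil}) identifies $Aut^0(\mathcal{A}_{\overline{s}}, \sigma_{\overline{s}})$ with $PGO_n^+$, $PGSp_{2m}$, and $PGL_n$ respectively, which are connected adjoint absolutely simple groups of type $B_{(n-1)/2}$ when $\sigma$ is orthogonal and $n$ is odd, $D_{n/2}$ when $\sigma$ is orthogonal and $n$ is even, $C_m$ when $\sigma$ is symplectic, and $A_{n-1}$ when $\sigma$ is unitary, provided one excludes the finitely many small values of $n$ for which this group degenerates or is not absolutely simple (the orthogonal cases $n \in \{2,4\}$). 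In particular $G_{\overline{s}}^0$ is reductive for every $s \in S$.

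It remains to assemble these facts. By Proposition \ref{prop:conrad_neutral}, $Aut^0(\mathcal{A},\sigma) = G^0$ is a reductive group scheme over $S$ that is open and closed in $G$; in particular it is smooth and affine over $S$. By Proposition \ref{prop:basechange}, $(G^0)_{\overline{s}} = (G_{\overline{s}})^0 = G_{\overline{s}}^0$, which by the previous paragraph is a connected adjoint absolutely simple group of the type dictated by the pair $(\sigma, n)$. Since adjointness and absolute simplicity of a semisimple group scheme are, by definition, conditions on its geometric fibers, this shows $Aut^0(\mathcal{A},\sigma)$ is an adjoint group scheme over $S$ with absolutely simple fibers, and the type at each $s$ is, by definition, the type of $G_{\overline{s}}^0$, hence as claimed. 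Alternatively one can argue \etale-locally: by Proposition \ref{prop:local_azumaya}, \etale-locally on $S$ the scheme $Aut^0(\mathcal{A},\sigma)$ is one of the split adjoint Chevalley groups $PGL_n$, $PGO_n^+$, $PGSp_{2m}$ pulled back from $spec~\mathbb{Z}[1/2]$, and being an adjoint group scheme with absolutely simple fibers of a given type is fpqc-local on the base, since geometric fibers are unchanged by base change and smoothness and affineness descend.

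The step I expect to be the main obstacle is the bookkeeping in the second paragraph: one must pin down the identity component precisely, distinguishing $PGO$ from $PGO^+$ and checking that the exchange automorphism in the unitary case genuinely lies outside the identity component, and confirm that the groups obtained are the \emph{adjoint} forms of their types rather than merely adjoint quotients of some isogenous group. This is essentially the only place where the standing hypothesis $1/2 \in \mathcal{O}_S(S)$ is genuinely used, through smoothness of the orthogonal and similitude groups and separability of the relevant quadratic and quadratic-\etale data; everything else is a formal consequence of compatibility with base change (Proposition \ref{prop:basechange}) and Conrad's descent statement (Proposition \ref{prop:conrad_neutral}).
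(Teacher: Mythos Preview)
Your proof is correct and follows essentially the same strategy as the paper: verify the hypotheses of Proposition~\ref{prop:conrad_neutral} by computing the geometric fibers of $Aut(\mathcal{A},\sigma)$ via base-change compatibility (the paper packages this as the forward reference to Lemma~\ref{lem:pullback}) and invoking the classical field case from \cite[Chapter~VI, \S26]{boi}, then read off adjointness, absolute simplicity, and type fiberwise. Your additional \etale-local argument and the remarks on excluded small values of $n$ are reasonable elaborations but not needed beyond what the paper does.
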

 \begin{proof}
By Lemma \ref{lem:pullback} below, for every $s \in S$ we have 
 \begin{align}
 \label{eqn:neutral}
     Aut^0(\mathcal{A}, \sigma)_s = (Aut(\mathcal{A}, \sigma)_s)^0 \simeq  Aut(\mathcal{A}_{k(s)}, \sigma_{k(s)})^0 = Aut^0(\mathcal{A}_{k(s)}, \sigma_{k(s)})
 \end{align}
Now $Aut^0(\mathcal{A}_{k(s)}, \sigma_{k(s)})$ is an absolutely simple adjoint algebraic group over  $k(s)$ (\cite[Chapter  VI, \S26]{boi}). This together with Theorem \ref{thm:autoazu} and Proposition \ref{prop:conrad_neutral} shows that  $Aut^0(\mathcal{A}, \sigma)$ is a  reductive group scheme over $S$. The rest of the claim follows from (\ref{eqn:neutral}), unravelling the definitions in \S\ref{sec:gpscheme} and   \cite[Chapter  VI, \S26]{boi}.
 \end{proof}

 \begin{remark}
   In the above theorem,  when $n=2$, $Aut^0(\mathcal{A}, \sigma)$ is not adjoint and when $n=4$, the fibers of $Aut^0(\mathcal{A}, \sigma)$ are not absolutely simple.
 \end{remark}
 
\begin{defn}
Based on Theorem \ref{thm:neutral}, we say that the pair $(\mathcal{A}, \sigma)$ is of type $*$ where $*$ is 
 \begin{itemize}
     \item  A if $\sigma$ is unitary
     \item B if  $\sigma$ is orthogonal and the degree of $\mathcal{A}$ is odd
     \item C if $\sigma$ is symplectic
     \item D if $\sigma$ is orthogonal and the degree of $\mathcal{A}$ is even
 \end{itemize}
\end{defn}

 \section{The fibered categories $Az_*^n$ and $GS_*^n$}
 We will now construct the fibered categories of degree-$n$ Azumaya algebras with involution of a given type  and the fibered category  of rank-$n$ adjoint group schemes with absolutely simple fibers
 of a given type over $Sch_S$ and show that they are stacks (in fact gerbes) with respect to the \etale topology. The classical reference for stacks and gerbes is \cite{giraud}. Other references are   \cite{vistoli_descent}, \cite{moerdijk} and the Appendix in \cite{deligne_tannakian}. 
 
\begin{defn}
Given a scheme $U$ in the category $Sch_S$, let $Az_*^n(U)$ denote the groupoid of degree-$n$ Azumaya algebras with involution of type $*$ (where $*$ is A, B, C or D)  over $U$ (morphisms are isomorphisms  of Azumaya algebras over $U$ that respect the involution structures).  For any morphism $f: V \rightarrow U$ in $Sch_S$ we have a pull-back functor 
 \begin{align*}
     Az_*^n (U) &\rightarrow Az_*^n(V)\\
        (\mathcal{A}, \sigma) &\mapsto f^*(\mathcal{A}, \sigma) := (f^*\mathcal{A}, f^*\sigma)
 \end{align*}
 which  makes the assignment  $U \rightarrow Az_*^n(U)$ a pseudo-functor  on $Sch_S$ (see \cite{vistoli_descent}, Chapter 3). 
 \end{defn}
 
\begin{defn} The  fibered category of degree-$n$ Azumaya algebras with involution of type $*$ associated to the above pseudo-functor is denoted by $Az_*^n \rightarrow Sch_S$.
\end{defn}
 
\begin{defn} The fibered category $GS_*^n \rightarrow Sch_S$ of rank-$n$ adjoint group schemes with absolutely simple fibers of type $*$ (where $*$ is A, B, C or D) is defined in a similar way where morphisms in  every fiber $GS_*^n(U)$  are isomorphisms.
 \end{defn}
 
 \begin{prop}
 \label{prop:gerbe}
 The fibered categories $Az_*^n \rightarrow Sch_S$  and  $GS_*^n \rightarrow Sch_S$ are stacks for the \etale topology. In fact, they are gerbes.
 \end{prop}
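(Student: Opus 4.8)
The plan is to verify the two defining properties of a stack—effectivity of descent for objects and the sheaf condition on $\mathrm{Isom}$-presheaves—and then upgrade "stack" to "gerbe" by checking local nonemptiness and local transitivity of the automorphism action. I would treat the two fibered categories in parallel, since the structural argument is the same and only the input results differ.

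\emph{Descent for morphisms.} First I would fix two objects $X,Y$ in $Az_*^n(U)$ (or $GS_*^n(U)$) and show that the presheaf $V \mapsto \mathrm{Isom}_V(X|_V, Y|_V)$ on $Sch_U$ is an \etale sheaf. For $GS_*^n$ this is immediate: an isomorphism of group schemes is a morphism of the underlying affine $U$-schemes compatible with multiplication, and since the $\underline{Aut}$ and $\underline{Isom}$ functors of semisimple group schemes are representable (Proposition \ref{prop:autg} and SGA3, Exp. 24), hence fpqc sheaves by Remark \ref{rmk:representable}. For $Az_*^n$, an isomorphism $(\mathcal{A},\sigma) \to (\mathcal{B},\tau)$ respecting involutions is a morphism of the underlying quasi-coherent $\mathcal{O}_U$-algebras satisfying closed conditions; descent for quasi-coherent sheaves (Stacks Project Tag 023T, cited already via \cite{vistoli_descent}, Theorem 4.33) plus the fact that being an algebra isomorphism respecting $\sigma$ is a sheaf-theoretic condition gives the claim. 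Equivalently, one can invoke Theorem \ref{thm:autoazu}: $\underline{Aut}(\mathcal{A},\sigma)$ is representable, and the $\underline{Isom}$-presheaf is a torsor-like twist of it, hence also an fpqc sheaf.

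\emph{Effective descent for objects.} Next, given an \etale cover $\{U_i \to U\}$ with objects $X_i$ over $U_i$ and a cocycle of isomorphisms over $U_i \times_U U_j$, I would glue. For $Az_*^n$: each $\mathcal{A}_i$ is in particular a quasi-coherent $\mathcal{O}_{U_i}$-module with extra structure, so by descent for quasi-coherent sheaves the underlying modules glue to a quasi-coherent $\mathcal{O}_U$-module $\mathcal{A}$; the multiplication maps, unit, and the involution $\sigma_i$ are morphisms of (tensor powers of) these sheaves that are compatible with the cocycle, so they descend to endow $\mathcal{A}$ with an $\mathcal{O}_U$-algebra structure and an involution $\sigma$. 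That $(\mathcal{A},\sigma)$ is again Azumaya of degree $n$ with involution of type $*$ is \etale-local on $U$ (the Azumaya condition and Proposition \ref{prop:local_azumaya} characterizations are \etale-local, and type is determined on geometric fibers, which are unchanged), so it holds. For $GS_*^n$: each $G_i$ is affine and smooth over $U_i$, and affine morphisms satisfy fpqc descent (Stacks Project Tag 0246, or \cite[Ch.~6, Thm.~4]{neron} as in the proof of Theorem \ref{thm:autoazu}), so the $G_i$ glue to an affine $U$-scheme $G$; the group law descends similarly; smoothness, affineness, and the geometric-fiber conditions (reductive, adjoint, absolutely simple, type $*$, rank $n$) are all \etale-local on the base, so $G \in GS_*^n(U)$.

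\emph{Gerbe axioms.} It remains to check that each stack is a gerbe for the \etale topology: (i) locally nonempty, and (ii) any two local objects are locally isomorphic. Both follow from results already in the excerpt. For $Az_*^n$, local nonemptiness is witnessed by the split object (which exists \etale-locally, indeed Zariski-locally where $S$ is covered by affines admitting a degree-$2$ \etale cover in the unitary case), and Proposition \ref{prop:local_azumaya} says \emph{every} object is \etale-locally isomorphic to the split one, giving (ii) by transitivity. For $GS_*^n$, Proposition \ref{prop:grp_etale} says any two adjoint group schemes with absolutely simple fibers of a given type are \etale-locally isomorphic (to the relevant Chevalley group over $\mathbb{Z}$), which yields both (i) and (ii) at once. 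The main obstacle is not any single deep input but the bookkeeping in the descent step: one must be careful that gluing the underlying modules/schemes is compatible with \emph{all} the extra structure simultaneously (multiplication, unit, involution; or group law) and that the resulting object genuinely lands in the prescribed fiber—this is where invoking "the property $P$ is \etale-local on the base" must be done for each of Azumaya, degree $n$, involution type, versus affine, smooth, reductive, adjoint, absolutely simple, rank, type. I would organize this as a short lemma list of local properties rather than inlining each verification.
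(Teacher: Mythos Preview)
Your proposal is correct and follows essentially the same approach as the paper: invoke descent for quasi-coherent sheaves with extra structure (resp.\ affine group schemes) to get the stack property, and then cite Proposition~\ref{prop:local_azumaya} (resp.\ Proposition~\ref{prop:grp_etale}) for the gerbe axioms. The paper's proof is considerably terser---it treats the descent verifications as ``standard'' and simply points to the relevant references---whereas you spell out the Isom-sheaf and effective-descent steps more explicitly, but the logical skeleton is the same.
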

 \begin{proof}
 By standard arguments from decent theory, it is easy to see that  $GS_*^n \rightarrow Sch_S$ is a stack for the \etale topology. It is a gerbe by Proposition \ref{prop:grp_etale}. For the case of $Az_*^n$,  we note from descent theory that quasi-coherent sheaves (as well as the morphisms) satisfy descent for the fpqc topology (\cite[Chapter 6, Theorem 4]{neron}) and hence also for the \etale topology. Quasi-coherent sheaves together with additional structure such as an algebra structure and involutions also descend (see for example   \cite[\S4.2.2 and \S4.2.3]{vistoli_descent} or   \cite[Chapter II, Theorem 3.4]{azumaya_descent}). This shows that $Az_*^n\rightarrow Sch_S$ is a stack for the \etale topology. It is a gerbe by Proposition \ref{prop:local_azumaya}. 
 \end{proof}

\section{Equivalence of $Az_*^{n'}$ and $GS_*^n$}
Let $Az_*^n \rightarrow Sch_S$  and  $GS_*^n \rightarrow Sch_S$ be the fibered categories defined in the previous section. In this section we describe a morphism between $Az_*^{n'}$ and $GS_*^n$ (where $n'$ is determined by $n$) that will yield the required equivalence of  fibered categories.  We will need the following lemma.

\begin{lemma}
\label{lem:pullback}
Let $(\mathcal{A}, \sigma) $ be an Azumaya algebra  with involution  over a scheme $X$. The assignment 
\begin{align*}
(\mathcal{A}, \sigma) \rightarrow Aut^0(\mathcal{A}, \sigma)
\end{align*}
respects pull-backs i.e., for $f: Y \rightarrow X$ we have a canonical isomorphism
\begin{align*}
    f^*(Aut^0(\mathcal{A}, \sigma)) \simeq Aut^0(f^*(\mathcal{A}, \sigma))
\end{align*}
\end{lemma}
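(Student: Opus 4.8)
The plan is to reduce to the two facts already available: that the formation of the automorphism group scheme of $(\mathcal{A},\sigma)$ is compatible with base change (this is essentially built into the functorial definition of $\underline{Aut}(\mathcal{A},\sigma)$, once representability from Theorem \ref{thm:autoazu} is in hand), and that the formation of the neutral component commutes with base change (Proposition \ref{prop:basechange}). First I would establish the canonical isomorphism $f^*(Aut(\mathcal{A},\sigma)) \simeq Aut(f^*(\mathcal{A},\sigma))$ at the level of the full automorphism group schemes. For this, note that for any $X$-scheme $T \to X$ factoring through $f$, i.e. for any $Y$-scheme $g\colon T \to Y$, pulling $(\mathcal{A},\sigma)$ back to $T$ can be done either directly along $T \to X$ or in two steps via $g$ and $f$, and these agree canonically since pullback of quasi-coherent sheaves with algebra-plus-involution structure is pseudo-functorial. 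Hence
\begin{align*}
    \big(f^*Aut(\mathcal{A},\sigma)\big)(T) = Aut(\mathcal{A},\sigma)(T) = Aut_{\mathcal{O}_T\text{-}alg}\big((T\to X)^*(\mathcal{A},\sigma)\big) \simeq Aut_{\mathcal{O}_T\text{-}alg}\big(g^*f^*(\mathcal{A},\sigma)\big) = Aut(f^*(\mathcal{A},\sigma))(T),
\end{align*}
naturally in $T$, so by Yoneda the two $Y$-schemes are canonically isomorphic.

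Next I would apply Proposition \ref{prop:basechange} with $G = Aut(\mathcal{A},\sigma)$ and the base change $Y \to X$: it gives
\begin{align*}
    f^*\big(Aut(\mathcal{A},\sigma)\big)^0 = \big(f^*Aut(\mathcal{A},\sigma)\big)^0.
\end{align*}
Combining this with the isomorphism $f^*Aut(\mathcal{A},\sigma) \simeq Aut(f^*(\mathcal{A},\sigma))$ established in the first step — and noting that forming the neutral component is functorial in the group scheme, so an isomorphism of group schemes induces an isomorphism on neutral components — yields
\begin{align*}
    f^*\big(Aut^0(\mathcal{A},\sigma)\big) = \big(f^*Aut(\mathcal{A},\sigma)\big)^0 \simeq \big(Aut(f^*(\mathcal{A},\sigma))\big)^0 = Aut^0(f^*(\mathcal{A},\sigma)),
\end{align*}
which is the asserted canonical isomorphism. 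Here I am using Theorem \ref{thm:autoazu} to know that $Aut(\mathcal{A},\sigma)$ is an honest (smooth, affine) group scheme, so that both Proposition \ref{prop:basechange} and the very notion of $G^0$ apply to it, and similarly for $f^*(\mathcal{A},\sigma)$ over $Y$.

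The only genuinely delicate point — and the one I would spell out carefully rather than wave at — is the compatibility of the two descriptions of pullback in the first step: on the left, $f^*Aut(\mathcal{A},\sigma)$ is the fiber product $Aut(\mathcal{A},\sigma)\times_X Y$ of the representing scheme, whose $T$-points (for $T$ a $Y$-scheme) are the $X$-morphisms $T \to Aut(\mathcal{A},\sigma)$, i.e. $\mathcal{O}_T$-algebra automorphisms of the pullback of $(\mathcal{A},\sigma)$ to $T$ along the composite $T \to Y \to X$; on the right, $Aut(f^*(\mathcal{A},\sigma))(T)$ is the $\mathcal{O}_T$-algebra automorphisms of the pullback of $f^*(\mathcal{A},\sigma)$ to $T$ along $T \to Y$. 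These coincide precisely because the canonical natural transformation $g^*f^* \Rightarrow (f\circ g)^*$ on quasi-coherent sheaves is an isomorphism and is compatible with the algebra and involution structures; this is standard pseudo-functoriality of pullback, already invoked implicitly in the definition of the pseudo-functor $U \mapsto Az_*^n(U)$, but it is worth stating explicitly as the mechanism that makes the identification canonical (hence compatible with further base change), which is what we need for the lemma to feed cleanly into the construction of the functor $F$.
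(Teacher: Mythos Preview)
Your proposal is correct and follows essentially the same route as the paper: first identify $f^*Aut(\mathcal{A},\sigma)$ with $Aut(f^*(\mathcal{A},\sigma))$ via the functor-of-points description plus Yoneda, then invoke Proposition~\ref{prop:basechange} to pass to neutral components. The paper's proof is the same argument compressed into two sentences; your version just unpacks the pseudo-functoriality of pullback that underlies the first step.
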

\begin{proof}
We note that  $f^*(\underline{Aut}(\mathcal{A}, \sigma)) = \underline{Aut}(f^*(\mathcal{A}, \sigma))$. Hence by the Yoneda lemma, there is a canonical isomorphism $f^*(Aut(\mathcal{A}, \sigma)) \simeq Aut(f^*(\mathcal{A}, \sigma))$. This together with Proposition \ref{prop:basechange} proves the claim.
\end{proof}

\begin{thm}
\label{thm:main}
The functor 
\begin{align*}
\mathbf{Aut^0} : Az_*^{n'} &\rightarrow GS_*^n \\
         (\mathcal{A}, \sigma) &\mapsto Aut^0(\mathcal{A}, \sigma)\\
          ((\mathcal{A}, \sigma) \xrightarrow{i}  (\mathcal{B}, \tau)) &\mapsto (\phi \rightarrow i\circ \phi \circ i^{-1})
\end{align*}
is an equivalence of fibered categories where 
\begin{itemize}
    \item $n>1$ and $n' = n+1$ if $*$ is of type $A$ 
    \item $n'=2n+1$ if $*$ is of type $B$
    \item  $n' = 2n$ if $*$ is of type $C$
    \item $n > 2$, $n\neq 4$ and $n'= 2n$ if $*$ is of type $D$
\end{itemize}
\end{thm}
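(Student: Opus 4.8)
The plan is to use that, by Proposition \ref{prop:gerbe}, both $Az_*^{n'}$ and $GS_*^n$ are gerbes for the \'etale topology, and to show that $\mathbf{Aut^0}$ is a morphism of such gerbes which induces an isomorphism on the automorphism sheaf of every local object; a standard argument then promotes this to an equivalence. First I would record that $\mathbf{Aut^0}$ is a well-defined morphism of fibered categories over $Sch_S$: it takes values in $GS_*^n$ by Theorem \ref{thm:neutral} together with the rank bookkeeping for the split models (the groups $Aut^0(\mathcal{M}_{n+1}\times\mathcal{M}_{n+1}^{op},\epsilon)=PGL_{n+1}$, $Aut^0(\mathcal{M}_{2n+1},tr)=SO_{2n+1}$, $Aut^0(\mathcal{M}_{2n},sp)=PGSp_{2n}$, $Aut^0(\mathcal{M}_{2n},tr)=PGO_{2n}^+$ are adjoint of rank $n$ and of the stated types), and it respects cartesian arrows by Lemma \ref{lem:pullback}.

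Next I would prove the gerbe criterion. For objects $x,y$ of $Az_*^{n'}$ over some $U$, the sheaf $\underline{Isom}(x,y)$ is a torsor under $\underline{Aut}(y)$, being \'etale-locally non-empty since $Az_*^{n'}$ is a gerbe; likewise $\underline{Isom}(\mathbf{Aut^0}x,\mathbf{Aut^0}y)$ is a torsor under $\underline{Aut}(\mathbf{Aut^0}y)$, and $\mathbf{Aut^0}$ induces a map between them that is equivariant along $\underline{Aut}(y)\to\underline{Aut}(\mathbf{Aut^0}y)$. If this latter map is an isomorphism of \'etale sheaves, the map of torsors is an isomorphism, so $\mathbf{Aut^0}$ is fully faithful. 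For essential surjectivity, the split object $(\mathcal{A}_0,\sigma_0)$ of $Az_*^{n'}(S)$ maps to the split Chevalley group $G_0$ of $GS_*^n(S)$, and by Proposition \ref{prop:grp_etale} every object of $GS_*^n(U)$ is \'etale-locally isomorphic to $G_0$, hence \'etale-locally in the essential image; full faithfulness together with the stack property of $Az_*^{n'}$ and $GS_*^n$ then yields essential surjectivity by the usual descend-and-glue argument.

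It thus remains to verify that $\mathbf{Aut^0}$ induces an isomorphism $\underline{Aut}(\mathcal{A},\sigma)\to\underline{Aut}\bigl(Aut^0(\mathcal{A},\sigma)\bigr)$ of sheaves on $Sch_S$, where the target is the automorphism sheaf of the group scheme $Aut^0(\mathcal{A},\sigma)$ and the map is by conjugation. Both sides are representable by smooth affine $S$-schemes (Theorem \ref{thm:autoazu}, Proposition \ref{prop:autg} and the remarks following them), so the assertion is \'etale-local on $S$; passing to an \'etale cover we may assume $(\mathcal{A},\sigma)=(\mathcal{A}_0,\sigma_0)$ is split, and then both group schemes and the conjugation map between them are pulled back from $\mathrm{Spec}\,\mathbb{Z}[1/2]$. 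So it suffices to prove that the conjugation homomorphism $\underline{Aut}(\mathcal{A}_0,\sigma_0)\to\underline{Aut}(G_0)$ of smooth affine group schemes over $\mathbb{Z}[1/2]$ is an isomorphism; being flat and of finite presentation on both sides, this reduces to an isomorphism on geometric fibres, where it is exactly the classical theorem of Weil (\cite{weil}, \cite[Chapter VI, \S26]{boi}) identifying $\mathrm{Aut}(A,\sigma)$ with $\mathrm{Aut}(Aut^0(A,\sigma))$ over a field with $2$ invertible. This is also where the hypothesis $n\neq4$ enters: for type $D_4$ the outer automorphism group of $PGO_8^+$ is $S_3$, strictly larger than $\mathbb{Z}/2\simeq\pi_0(PGO_8)$, so the conjugation map is not surjective on component groups and $\mathbf{Aut^0}$ fails to be essentially surjective.

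The step I expect to be the main obstacle is the last one: descending the classical fibrewise statement to an honest isomorphism of $\mathbb{Z}[1/2]$-group schemes. A clean route is to use the exact sequence $1\to G_0\to\underline{Aut}(G_0)\to\underline{Out}(G_0)\to1$ of SGA3 (Exp. XXIV, Thm. 1.3), observe that conjugation carries $Aut^0(\mathcal{A}_0,\sigma_0)$ isomorphically onto the inner part $G_0$ (since $G_0$ is adjoint) and induces an isomorphism of the finite \'etale component group schemes (which are constant over $\mathbb{Z}[1/2]$ and matched by the fibrewise result), so that the five lemma applies; alternatively, one invokes that a homomorphism of smooth affine group schemes of finite presentation that is an isomorphism on every fibre is itself an isomorphism. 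A secondary, essentially clerical difficulty is keeping track of the type/rank dictionary in the split cases --- in particular that $PGO_{2n+1}$ and $PGSp_{2n}$ are connected whereas $PGO_{2n}$ has two connected components --- together with the low-rank coincidences of Dynkin type that the excluded cases are designed to avoid.
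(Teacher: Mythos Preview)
Your proposal is correct and follows essentially the same route as the paper: use the gerbe criterion to reduce to showing that $\mathbf{Aut^0}$ induces an isomorphism $\underline{Aut}(\mathcal{A},\sigma)\to\underline{Aut}(Aut^0(\mathcal{A},\sigma))$, invoke representability by smooth affine group schemes, and check this on fibres via the classical field case in \cite[Chapter VI, \S26]{boi}. Your write-up is more detailed (you spell out the torsor argument, the $n\neq 4$ obstruction, and an intermediate reduction to the split model over $\mathbb{Z}[1/2]$), but the architecture is the same as the paper's proof.
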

\begin{proof}
The functor $\mathbf{Aut^0}$ defines a morphism of fibered categories by Lemma \ref{lem:pullback}.  Since $Az_*^{n'} \rightarrow Sch_S$  and  $GS_*^n \rightarrow Sch_S$ are gerbes by Proposition \ref{prop:gerbe}, to show that $\mathbf{Aut^0}$ is an equivalence it suffices to show that for any  object $(\mathcal{A}, \sigma)$  in $Az_*^{n'}(S)$ (say the split object), $\mathbf{Aut^0}$  induces isomorphism of  sheaves $\underline{Aut}(\mathcal{A}, \sigma)$ and $\underline{Aut}(Aut^0(\mathcal{A}, \sigma))$ (see \cite[Chapter IV, \S3.1]{giraud} or \cite{moerdijk}). Now  by Theorem \ref{thm:autoazu} and Proposition \ref{prop:autg} both $\underline{Aut}(\mathcal{A}, \sigma)$ and $\underline{Aut}(Aut^0(\mathcal{A}, \sigma))$ are represented  by smooth affine group schemes over $S$ denoted  by $Aut(\mathcal{A}, \sigma)$ and $Aut(Aut^0(\mathcal{A}, \sigma))$ respectively. So it suffices to check that $\mathbf{Aut^0}$ induces isomorphism at every  fiber (\cite[\href{https://stacks.math.columbia.edu/tag/039E}{Tag 039E}]{stacks-project}, \cite[\href{https://stacks.math.columbia.edu/tag/025G}{Tag 025G}]{stacks-project}). Again by the proof of Lemma \ref{lem:pullback}, we see that for every  $s \in S$, $\mathbf{Aut^0}$ induces morphism of schemes over $k(s)$
\begin{align*}
\mathbf{Aut^0}_{{k(s)}} : {Aut}(\mathcal{A}_{{k(s)}}, \sigma_{{k(s)}}) \rightarrow  {Aut}(Aut^0(\mathcal{A}_{{k(s)}}, \sigma_{{k(s)}}))
\end{align*}
Now we are in the case of fields and the fact that $\mathbf{Aut^0}_{{k(s)}}$ is an isomorphism follows from \cite[Chapter  VI, \S26]{boi}.
\end{proof}

\begin{remark}
    In the case of $n=1$ for type $A$, the above equivalence holds if $Az^2_A$ is defined to be the fibered category of Azumaya algebras of degree $2$ (\cite[page 366]{boi}). Similarly, in the case of  $n=2$ for type $D$, since the corresponding root system is not irreducible, one has to  remove the phrase ``with absolutely simple fibers" in the definition of  $GS^D_2$ for the above equivalence to hold. The proofs are similar. 
\end{remark}

\begin{remark}
The classification of simply connected group schemes  with absolutely simple fibers over $S$ is similar to the adjoint case. The functor in this case is given by 
\begin{align*}
    \mathbf{\overline{Aut^0}}: (\mathcal{A}, \sigma) \rightarrow \overline{Aut^0 (\mathcal{A}, \sigma)}
\end{align*}
where for a semisimple group scheme $G$ over $S$, $\overline{G}$ denotes its unique simply connected cover (see  \cite[Exercise 6.5.2]{bconrad_red}). 
\end{remark}

\begin{remark}
  Recall that when the base scheme is the spectrum of a field of characteristic $2$, the groups  $Aut^0(A, \sigma)$ are not smooth  when $\sigma$ is orthogonal (\cite[page 347]{boi}). Hence when $2$ is  not invertible in $S$, things look a little different for types $B$ and $D$. One could possibly work fppf locally instead of etale locally  for these types to get a similar classification as in the case of fields (\cite[Lemma C.2.1]{conrad_reductive}).  We did not want to deal with these subtle technicalities in this paper that will take away the nice picture when  $2$ is invertible. 
\end{remark}

\section{Applications}
In this section we show some interesting consequences of Theorem \ref{thm:main}. \\

For a ring $R$, let $P_{n'}(R,*)$ denote the isomorphism classes of  degree-$n'$  Azumaya algebras with involution  of type $*$ over $R$.
\begin{cor}
Let $R$ be a Henselian local ring with residue field $k$. Assume $char~k \neq 2$.  Then the  restriction map
\begin{align*}
    P_{n'}(R,*) \rightarrow P_{n'}(k, *)
\end{align*}
is bijective for every $(n',*)$ listed in Theorem \ref{thm:main}. 
\end{cor}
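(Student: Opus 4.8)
The plan is to reduce the statement about isomorphism classes of Azumaya algebras with involution to the already-established equivalence of gerbes in Theorem~\ref{thm:main}, together with the corresponding known specialization result for adjoint group schemes over Henselian local rings. First I would observe that, by Theorem~\ref{thm:main}, the functor $\mathbf{Aut^0}$ gives a bijection between $P_n(R,*)$ and the set of isomorphism classes of rank $m$ adjoint group schemes with absolutely simple fibers of the corresponding type over $R$ (with $m$ the appropriate function of $n$ from the statement of Theorem~\ref{thm:main}), and likewise over $k$; moreover these bijections are compatible with the restriction maps $\mathrm{Spec}\,k \to \mathrm{Spec}\,R$ because $\mathbf{Aut^0}$ commutes with pullback by Lemma~\ref{lem:pullback}. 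Hence it suffices to prove that reduction modulo the maximal ideal induces a bijection on isomorphism classes of such adjoint group schemes.

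For that, the key input is that over a Henselian local ring $R$ with residue field $k$, reductive group schemes of a fixed type are classified by $H^1_{\text{\etale}}(R, \underline{Aut}(G_0))$ for $G_0$ a fixed (Chevalley) form, by Proposition~\ref{prop:grp_etale}, and the analogous statement holds over $k$; then I would invoke the fact that for a smooth affine group scheme $H$ over a Henselian local ring the restriction $H^1_{\text{\etale}}(R,H) \to H^1_{\text{\etale}}(k,H)$ is a bijection. Concretely, $\underline{Aut}(G_0)$ is smooth affine over $\mathrm{Spec}\,\mathbb{Z}$ by Proposition~\ref{prop:autg} (an extension of $\mathrm{Out}$, a finite constant group scheme, by the adjoint group $G_0$, which is smooth affine), so $\underline{Aut}(G_0)_R$ is smooth affine over $R$. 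Surjectivity of $H^1(R,H)\to H^1(k,H)$ follows from smoothness: a cocycle over $k$ is represented by a torsor, which is a smooth $R$-scheme after the torsor itself is shown to be smooth, and a smooth morphism to a Henselian local ring with a section over the closed point has a section (Hensel's lemma / \cite{ega4_2}); injectivity is the same argument applied to the scheme of isomorphisms between two torsors, which is again smooth.

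The main obstacle — really the only subtle point — is marshalling the correct cohomological bijectivity statement and making sure it applies: one must verify that the relevant automorphism sheaves are represented by \emph{smooth} affine $R$-group schemes (so that torsors under them are smooth $R$-schemes and Hensel's lemma applies to sections), and that $H^1$ in the \etale topology is the right object classifying the forms in question. Both of these are in hand: smoothness of $\underline{Aut}(\mathcal{A},\sigma)$ and of $\underline{Aut}(G)$ for semisimple $G$ are Theorem~\ref{thm:autoazu} and Proposition~\ref{prop:autg}, and the gerbe description in Proposition~\ref{prop:gerbe} says precisely that the isomorphism classes in each groupoid are a torsor-cohomology set. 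So the write-up would proceed: (i) pass through Theorem~\ref{thm:main} to translate everything into $H^1_{\text{\etale}}$ of a smooth affine group scheme; (ii) cite the Henselian-local bijectivity of $H^1$ for smooth affine group schemes; (iii) check compatibility of all identifications with restriction to the closed point via Lemma~\ref{lem:pullback}. The type is locally constant and $\mathrm{Spec}\,R$ is connected (local), so the type is genuinely preserved under reduction, which is what lets us stay within a single $P_n(-,*)$.
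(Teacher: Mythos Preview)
Your approach is correct and essentially identical to the paper's: reduce via Theorem~\ref{thm:main} and Lemma~\ref{lem:pullback} to the corresponding specialization bijection for adjoint group schemes over a Henselian local ring. The paper simply cites SGA3, Exp.~XXIV, Prop.~1.21 for that last step, whereas you unfold its proof via smoothness of $\underline{Aut}(G_0)$ and Hensel's lemma for $H^1_{\text{\etale}}$.
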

\begin{proof}
This follows from Lemma \ref{lem:pullback}, Theorem \ref{thm:main} and [SGA3, Exp. XXIV, Prop. 1.21].
\end{proof}
\begin{remark}
The above corollary is a generalization of a similar statement about  isomorphism classes of Azumaya algebras. See Theorem 6.1 in \cite{grothendieck_brauer}.
\end{remark}

We will recall the concept of \emph{good reduction} for algebraic groups defined in \cite{crr_spinor}. Let $k$ be a discrete valued field with valuation $v$. Let $k_v$,  $\mathcal{O}_v$ and $k^{(v)}$ denote respectively  the completion of $k$, the valuation ring of $k_v$  and the residue field. An absolutely almost simple  linear algebraic group $G$ over $k$ is said to have \emph{good reduction at} $v$ if there exists a reductive group scheme $\mathcal{G}$ over $\mathcal{O}_v$ such that  $\mathcal{G} \otimes_{\mathcal{O}_v} k_v \simeq G \otimes_{k} k_v$. In this case, let us call $\mathcal{G}$ an $\mathcal{O}_v$-\emph{model for} $G$. Studying good reduction of $G$ has many applications such as computing the genus of algebraic group and proving Hasse principles. We refer the reader to \cite{crr_spinor} for more details. 
\begin{cor}
Let $G$ be a rank-$n$  absolutely simple adjoint (or simply connected) algebraic group of classical type ($n\neq 4$ if $G$ is of type $D$) over a discrete valued field $k$ with valuation $v$. Assume  that the characteristic of the residue field $k^{(v)}$  is different from $2$.  Suppose $G$ has good reduction at $v$. Then any two $\mathcal{O}_v$-models of $G$ are isomorphic. In other words,  an  $\mathcal{O}_v$-model of $G$ if it exists is unique up to isomorphism.
\end{cor}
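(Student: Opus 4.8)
The plan is to recognise the isomorphism class of an $\mathcal{O}_v$-model of $G$ as an element of a first cohomology set over $\mathcal{O}_v$, and to deduce the uniqueness from the fact that a torsor under the automorphism group scheme of a model which becomes trivial over $k_v$ is already trivial over $\mathcal{O}_v$. First I would record the elementary reductions. As $k_v$ is the completion of $k$, the ring $\mathcal{O}_v$ is a complete — hence Henselian — discrete valuation ring whose residue field $k^{(v)}$ has characteristic $\neq 2$, so $2$ is invertible in $\mathcal{O}_v$, and $\mathrm{Spec}\,\mathcal{O}_v$ is connected. Let $\mathcal{G}_1,\mathcal{G}_2$ be two $\mathcal{O}_v$-models of $G$; each is reductive over $\mathcal{O}_v$ with generic fibre isomorphic to $G\otimes_k k_v$. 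Since $\mathrm{Spec}\,\mathcal{O}_v$ is connected the type and rank of $\mathcal{G}_i$ are constant (\S\ref{sec:gpscheme}), and since the centre of a reductive group scheme is of multiplicative type and is therefore trivial as soon as it is generically trivial, $\mathcal{G}_i$ is an adjoint (resp. simply connected) group scheme over $\mathcal{O}_v$ with absolutely simple fibres, of the same type $*$ and rank $n$ as $G\otimes k_v$; in particular $\mathcal{G}_1$ and $\mathcal{G}_2$ have the same root datum.

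By Proposition \ref{prop:grp_etale}, $\mathcal{G}_1$ and $\mathcal{G}_2$ are then isomorphic locally for the \etale topology on $\mathrm{Spec}\,\mathcal{O}_v$, so the functor $\underline{Isom}_{\mathcal{O}_v-grp}(\mathcal{G}_1,\mathcal{G}_2)$ is a torsor, for the \etale topology, under $\underline{Aut}(\mathcal{G}_1)$, which is a smooth affine $\mathcal{O}_v$-group scheme by Proposition \ref{prop:autg}; write $\xi\in H^1(\mathcal{O}_v,\underline{Aut}(\mathcal{G}_1))$ for its class. Then $\mathcal{G}_1\simeq\mathcal{G}_2$ over $\mathcal{O}_v$ precisely when $\xi$ is trivial, while the hypothesis $\mathcal{G}_1\otimes k_v\simeq G\otimes k_v\simeq\mathcal{G}_2\otimes k_v$ says exactly that $\xi$ restricts to the trivial class in $H^1(k_v,\underline{Aut}(\mathcal{G}_1)\otimes k_v)$. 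So it suffices to prove that the restriction map $H^1(\mathcal{O}_v,\underline{Aut}(\mathcal{G}_1))\to H^1(k_v,\underline{Aut}(\mathcal{G}_1)\otimes k_v)$ has trivial kernel. In the classical cases ($*=A,B,C$, or $D$ with $n\neq 4$) one may reach the same point through Theorem \ref{thm:main} instead: it writes $\mathcal{G}_i=\mathbf{Aut^0}(\mathcal{A}_i,\sigma_i)$ for a unique $(\mathcal{A}_i,\sigma_i)\in Az_*^{n'}(\mathcal{O}_v)$, Lemma \ref{lem:pullback} together with Theorem \ref{thm:main} over $k_v$ forces $(\mathcal{A}_1,\sigma_1)\otimes k_v\simeq(\mathcal{A}_2,\sigma_2)\otimes k_v$, and the preceding corollary then turns the claim into the injectivity of $P_{n'}(\mathcal{O}_v,*)\to P_{n'}(k_v,*)$, which is the special case of the displayed kernel statement for $\underline{Aut}(\mathcal{A}_0,\sigma_0)\simeq\underline{Aut}(G_0)$ at a split object.

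The hard part, and the only step that uses something not already in the paper, is this last triviality-of-kernel. By SGA3 (Exp. XXIV) there is an exact sequence $1\to\mathcal{G}_1^{\mathrm{ad}}\to\underline{Aut}(\mathcal{G}_1)\to\underline{Out}(\mathcal{G}_1)\to 1$ with $\mathcal{G}_1^{\mathrm{ad}}$ an adjoint (connected) semisimple $\mathcal{O}_v$-group and $\underline{Out}(\mathcal{G}_1)$ a finite \etale group scheme. For this quotient one has $H^1(\mathcal{O}_v,\underline{Out}(\mathcal{G}_1))\simeq H^1(k^{(v)},\underline{Out}(\mathcal{G}_1)_{k^{(v)}})$, since finite \etale covers of $\mathrm{Spec}\,\mathcal{O}_v$ coincide with those of its closed point, and the comparison map to $H^1(k_v,\underline{Out}(\mathcal{G}_1)_{k_v})$ is then inflation along the canonical surjection of absolute Galois groups $\mathrm{Gal}(k_v)\twoheadrightarrow\mathrm{Gal}(k^{(v)})$ (the coefficient group being acted on through the residue field), hence injective. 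Twisting $\mathcal{G}_1^{\mathrm{ad}}$ by the $\underline{Out}(\mathcal{G}_1)$-torsors over $\mathcal{O}_v$ reduces what remains to the statement that a torsor under a connected reductive group scheme over the discrete valuation ring $\mathcal{O}_v$ that acquires a $k_v$-point is already trivial over $\mathcal{O}_v$ — i.e. the Grothendieck--Serre conjecture, which is known over discrete valuation rings. I expect the non-connectedness of $\underline{Aut}(\mathcal{G}_1)$, dealt with by the $\pi_0$-comparison just sketched, to be the only real technical nuisance; the substantive input is the Grothendieck--Serre property of $\mathcal{O}_v$.
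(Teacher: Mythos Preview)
Your argument is correct and takes a genuinely different route from the paper's. The paper's proof is a one-line appeal to Lemma~\ref{lem:pullback}, Theorem~\ref{thm:main}, and the purity theorem of Panin \cite{panin_purity} or Beke \cite{sofie}: the equivalence $Az_*^{n'}\simeq GS_*^n$ converts two $\mathcal{O}_v$-models with isomorphic generic fibre into two Azumaya algebras with involution over $\mathcal{O}_v$ that become isomorphic over $k_v$, and since a DVR with $2$ invertible is a semilocal B\'ezout domain, Beke's theorem forces them to be isomorphic already over $\mathcal{O}_v$. You instead set up the $\underline{Isom}$-torsor and show directly that $H^1(\mathcal{O}_v,\underline{Aut}(\mathcal{G}_1))\to H^1(k_v,\underline{Aut}(\mathcal{G}_1)_{k_v})$ has trivial kernel, handling the finite \'etale quotient $\underline{Out}$ via the Henselian property and the connected part $\mathcal{G}_1^{\mathrm{ad}}$ via the Grothendieck--Serre conjecture over DVRs. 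Your approach has the advantage of applying uniformly to all types, including $D_4$ and the exceptional ones, whereas the paper's route through Theorem~\ref{thm:main} is confined to the classical types with $D_4$ excluded. Two small remarks: your phrase ``the preceding corollary then turns the claim into the injectivity of $P_{n'}(\mathcal{O}_v,*)\to P_{n'}(k_v,*)$'' should cite Theorem~\ref{thm:main} rather than the Henselian corollary (which compares $\mathcal{O}_v$ with its \emph{residue} field, not with $k_v$); and in the twisting step you implicitly use that $\underline{Out}(\mathcal{G}_1)(\mathcal{O}_v)\to\underline{Out}(\mathcal{G}_1)(k_v)$ is bijective (again by Henselianity), so that a lift of $\xi$ to $H^1(\mathcal{O}_v,\mathcal{G}_1^{\mathrm{ad}})$ can be chosen generically trivial.
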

\begin{proof}
This follows from Lemma \ref{lem:pullback}, Theorem \ref{thm:main} and   \cite[Theorem 3.7]{sofie}.
\end{proof}

\begin{remark}
The classification in Theorem \ref{thm:main} can be used to translate  the notion of good reduction of absolutely simple adjoint (or simply connected) classical algebraic groups to the notion of good reduction of the underlying sesquilinear forms over division algebras. See for example \cite[Remark 4.3]{srimathy_hermitian}.
\end{remark}

Let $R$ be a regular local ring and let $G$ be a reductive groups scheme over $R$. A conjecture of Grothendieck and Serre (\cite[pp. 26-27, Remark 3]{gro58}, \cite[Remark 1.11(a)]{gro68} and \cite[pp. 31, Remark]{serre58}) states that rationally trivial principal $G$-homogeneous spaces are trivial i.e., the kernel of the canonical map
\begin{align} \label{eqn:conj}
    H^1(R, G) \rightarrow H^1(K, G)
\end{align}
where $K$ is the fraction field of $R$, is trivial. While the conjecture is still open to be proved in complete generality,  the proofs for various cases of $R$ and $G$ have been established since Nisnevich's thesis (\cite{nis}). If $R$ is a regular local ring containing a field of characteristic $\neq 2$ or if $R$ is a semilocal B\'{e}zout domain with $2$ invertible, I.Panin (\cite[Theorem 1.1]{panin_purity}) and S.Beke (\cite[Theorem 3.7]{sofie}) respectively have proved that any two Azumaya algebras with involutions over $R$ that are rationally isomorphic (i.e., isomorphic over the fraction field of $R$) are already isomorphic. This implies that  for the above cases of $R$, the kernel of the map in (\ref{eqn:conj}) is trivial when $G \simeq Aut(A, \sigma)$, the automorphism scheme of an Azumaya algebra with involution over $R$. Since any rank-$n$ adjoint group scheme of classical type ($n \neq 4$ if the group is of type $D$) over $R$ with absolutely simple fibers is isomorphic to  $Aut^0(A, \sigma)$  for some $(A, \sigma)$ by Theorem \ref{thm:main}, we conclude:
\begin{cor}
Let $R$ be a integral domain with $2$ invertible  that satisfies the property that any  Azumaya algebra with involution over $R$ that is split over the fraction field of $R$ is  already split. Then the Grothendieck-Serre conjecture is true for any rank-$n$ adjoint group scheme of classical type ($n \neq 4$ if the group is of type $D$)   with absolutely simple fibers over $R$. In particular, this happens when  $R$ is a regular local ring containing a field of characteristic $\neq 2$ or when $R$ is  a semilocal B\'{e}zout domain with $2$ invertible. 
\end{cor}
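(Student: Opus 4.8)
The plan is to re-express the Grothendieck--Serre kernel for such a $G$, namely $\ker\!\big(H^1(R,G)\to H^1(K,G)\big)$ with $K$ the fraction field of $R$, in terms of isomorphism classes of Azumaya algebras with involution, and then to feed in the hypothesis on $R$. Let $G$ be an adjoint group scheme of classical type $*$ with absolutely simple fibres over $R$. By Theorem~\ref{thm:main} I may fix an Azumaya algebra with involution $(\mathcal{A},\sigma)$ of type $*$ and degree $n'$ over $R$ together with an isomorphism $G\simeq Aut^0(\mathcal{A},\sigma)$, and use $(\mathcal{A},\sigma)$ as the base object for computing $H^1$. Since $Aut^0(\mathcal{A},\sigma)\subseteq Aut(\mathcal{A},\sigma)$ acts on $(\mathcal{A},\sigma)$ and fixes its centre, twisting by an $Aut^0(\mathcal{A},\sigma)$-torsor $P$ over an $R$-scheme $X$ produces an object ${}^{P}(\mathcal{A},\sigma)$ of $Az_*^{n'}(X)$, giving a map of pointed sets
\begin{align*}
  t_X:\ H^1(X,Aut^0(\mathcal{A},\sigma))\longrightarrow\{\text{isomorphism classes in }Az_*^{n'}(X)\},\qquad [P]\mapsto[\,{}^{P}(\mathcal{A},\sigma)\,],
\end{align*}
compatible with the base change $R\to K$. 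Everything will come down to controlling $\ker t_R$.

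For types $B$ and $C$ this is clean: here $Aut^0(\mathcal{A},\sigma)=Aut(\mathcal{A},\sigma)$ (for $B$ because $n'$ is odd, for $C$ because $\mathrm{Out}(C_m)$ is trivial), so $P$ is canonically the Isom-torsor $\underline{\mathrm{Isom}}\big((\mathcal{A},\sigma),{}^{P}(\mathcal{A},\sigma)\big)$ by Lemma~\ref{lem:pullback} and the representability in Theorem~\ref{thm:autoazu}, whence $t_X$ has trivial kernel. Given $\xi\in\ker\!\big(H^1(R,G)\to H^1(K,G)\big)$, the object ${}^{\xi}(\mathcal{A},\sigma)$ becomes isomorphic to $(\mathcal{A},\sigma)$ over $K$; by the isomorphism property of $R$ it is already isomorphic to $(\mathcal{A},\sigma)$ over $R$, so $t_R(\xi)$ is trivial and hence $\xi$ is trivial. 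The last sentence of the corollary then follows because a regular local ring containing a field of characteristic $\neq2$ (by \cite[Theorem~1.1]{panin_purity}) and a semilocal B\'ezout domain with $2$ invertible (by \cite[Theorem~3.7]{sofie}) have the stated isomorphism property.

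The main obstacle is types $A$ and $D$, where $Aut^0(\mathcal{A},\sigma)$ is a proper subgroup scheme of $Aut(\mathcal{A},\sigma)$ with quotient $\mathbb{Z}/2\mathbb{Z}$: the ``outer'' $\mathbb{Z}/2\mathbb{Z}$ that records, in type $D$, the action on the discriminant quadratic \'etale algebra $Z(\mathcal{A},\sigma)$, and, in type $A$, the Galois action on the centre. Then $t_R$ need not be injective; from the exact sequence $1\to Aut^0(\mathcal{A},\sigma)\to Aut(\mathcal{A},\sigma)\to\mathbb{Z}/2\mathbb{Z}\to1$ one gets $\ker t_R=\{0,\delta(1)\}$, where $\delta:(\mathbb{Z}/2\mathbb{Z})(R)\to H^1(R,Aut^0(\mathcal{A},\sigma))$ is the connecting map, and $\delta(1)=0$ exactly when $(\mathcal{A},\sigma)$ admits over $R$ an automorphism inducing the nontrivial automorphism of $Z(\mathcal{A},\sigma)$ (type $D$), respectively an isomorphism onto its opposite compatible with $\sigma$ (type $A$). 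The chase above still forces $\xi\in\{0,\delta(1)\}$, and triviality of $\xi$ over $K$ supplies the relevant ``outer'' isomorphism of $(\mathcal{A},\sigma)$ over $K$; what remains is to descend it to $R$. I would do this by enlarging the invariant: track, besides $(\mathcal{A},\sigma)$, the quadratic \'etale algebra $Z(\mathcal{A},\sigma)$ and, in type $D$, the Clifford algebra $C(\mathcal{A},\sigma)$ with its canonical involution (an Azumaya algebra with involution over $Z(\mathcal{A},\sigma)$). Since $R$ is integrally closed, quadratic \'etale algebras inject from $R$ into $K$, and, applying the isomorphism hypothesis over the finite \'etale extension $Z(\mathcal{A},\sigma)$, so do the Clifford data, which rules out $\xi=\delta(1)$. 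Making this descent unconditional — notably the cases in which the Clifford algebra does not by itself separate the two torsor classes — is the technical heart; for the two explicit families of rings in the statement one may instead observe that the required triviality of $\delta$ is subsumed in the purity results of Panin and Beke for the relevant special orthogonal and unitary groups.
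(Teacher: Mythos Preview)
Your overall route is the paper's: write $G\simeq Aut^0(\mathcal{A},\sigma)$ via Theorem~\ref{thm:main} and feed in the rational--isomorphism hypothesis (respectively the theorems of Panin and Beke) on the Azumaya side. The paper's ``proof'' is literally only the paragraph preceding the corollary: it notes that the hypothesis makes $H^1(R,Aut(\mathcal{A},\sigma))\to H^1(K,Aut(\mathcal{A},\sigma))$ have trivial kernel and then invokes Theorem~\ref{thm:main}, with no further argument.

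You have gone further and put your finger on a genuine subtlety the paper does not address: for types $A$ and $D$ one has $Aut^0(\mathcal{A},\sigma)\subsetneq Aut(\mathcal{A},\sigma)$, so controlling $H^1(R,Aut(\mathcal{A},\sigma))$ is not literally the same as controlling $H^1(R,G)=H^1(R,Aut^0(\mathcal{A},\sigma))$. Your exact--sequence analysis and reduction to lifting the outer class $c\in\pi_0(Aut(\mathcal{A},\sigma))(R)$ from $K$ to $R$ is the correct diagnosis, and your types $B$, $C$ argument is complete and clean. Where your write--up is still incomplete is exactly where the paper is silent: under the bare hypothesis of the first sentence you have not actually shown that the outer automorphism realized over $K$ descends to $R$; the Clifford/discriminant bookkeeping you sketch is plausible but not carried through (and appealing to ``integrally closed'' already goes beyond the stated hypothesis). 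For the explicit rings in the last sentence both you and the paper ultimately rely on the full strength of \cite{panin_purity} and \cite{sofie}, which is the honest way out.
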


\section{Acknowledgements}
The author acknowledges the support of the DAE, Government of India, under Project Identification No. RTI4001. She thanks Jonathan Wise for pointing to an important reference and also thanks the anonymous referee for their useful comments and suggestions.

\nocite*{}
\bibliographystyle{alpha}
\bibliography{ref_azumaya}
\end{document}